\documentclass[twoside,12pt]{article}

\usepackage{amssymb, amsmath, mathrsfs, amsthm}
\usepackage{graphicx}
\usepackage{color}
\usepackage{tikz}
\usepackage[top=2cm, bottom=2cm, left=1.8cm, right=1.8cm]{geometry}
\usepackage{float, caption, subcaption}
\usepackage{diagbox}
\usepackage{algorithm}
\usepackage{algpseudocode}

\usepackage[colorlinks,
  linkcolor=blue,
  anchorcolor=blue,
  citecolor=blue]{hyperref}

\newcommand{\bd}{\begin{description}}
\newcommand{\ed}{\end{description}}
\newcommand{\bi}{\begin{itemize}}
\newcommand{\ei}{\end{itemize}}
\newcommand{\be}{\begin{enumerate}}
\newcommand{\ee}{\end{enumerate}}
\newcommand{\beq}{\begin{equation}}
\newcommand{\eeq}{\end{equation}}
\newcommand{\beqs}{\begin{eqnarray*}}
\newcommand{\eeqs}{\end{eqnarray*}}
\newcommand{\flr}[1]{\left\lfloor #1 \right\rfloor}
\newcommand{\floor}[1]{\flr{#1}}
\newcommand{\ceil}[1]{\left\lceil #1 \right\rceil}

\definecolor{DarkGreen}{rgb}{0.2, 0.6, 0.3}

\usepackage{multicol}
\usepackage{multirow}
\usepackage{makecell}

\newtheorem{theorem}{Theorem}[section]

\newtheorem{lemma}[theorem]{Lemma}
\newtheorem{definition}[theorem]{Definition}
\newtheorem{corollary}[theorem]{Corollary}

\newtheorem{proposition}[theorem]{Proposition}

\newtheorem{problem}[theorem]{Problem}

\newcommand{\ind}{\mathrm{ind}}
\newcommand{\BRs}{\mathrm{R}^{\sharp}}
\newcommand{\LaT}{\mathrm{La}}
\newcommand{\LT}{\mathrm{L}}

\begin{document}

\title{\textbf{Erd\H{o}s-Gy\'{a}rf\'{a}s problem for partially ordered sets}}

\author{
Gyula O.\,H. Katona\thanks{HUN-REN Alfr\'ed R\'enyi Institute of Mathematics, Reaaltanoda utca 13--15, Budapest 1053, Hungary. E-mail: \texttt{katona.gyula.oh@renyi.hu}}
\and
Yaping Mao\thanks{Corresponding author. Academy of Plateau Science and Sustainability, and School of Mathematics and Statistics, Qinghai Normal University, Xining, Qinghai 810008, China. E-mail: \texttt{yapingmao@outlook.com}; \texttt{myp@qhnu.edu.cn}}
}

\date{}

\maketitle
\begin{abstract}
Given integers $p,q,t$ with $1 \le t \le p$ and $1 \le q \le h_p(t)$, a strong $(p,q,t)$-coloring of the Boolean lattice $B_n$ is a coloring of its $t$-chains such that every induced copy of $B_p$ in $B_n$ uses at least $q$ colors on its $t$-chains. Let $f_t^{\sharp}(n,p,q)$ denote the minimum number of colors in such a coloring. We study this Boolean-lattice analogue of the Erd\H{o}s-Gy\'{a}rf\'{a}s function.
We first show that every finite poset strongly embeds into a Boolean lattice. Combined with a structural Ramsey theorem for finite posets with linear extensions, this implies the existence of the strong Boolean Ramsey number $\mathrm{R}^{\sharp}_{k,t}(\mathcal{B}\mid Q)$ for every integer $k\ge1$, every $t\ge1$, and every nonempty finite poset $Q$. In particular, this gives an affirmative answer to a problem of Cox and Stolee and yields the existence of $f_t^{\sharp}(n,p,2)$.
Next, using the symmetric Lov\'asz local lemma, we obtain a probabilistic upper bound on $f_t^{\sharp}(n,p,q)$. Finally, we prove lower bounds by combining Tur\'an-type extremal estimates for $t$-chains, a double-counting argument, and a generalized Lubell-type framework for $t$-chains.

\noindent\textbf{Keywords:}
Ramsey theory; Boolean lattice; Erd\H{o}s-Gy\'{a}rf\'{a}s function; $t$-chain; Lov\'asz local lemma; Lubell function; strong Boolean Ramsey number.

\medskip
\noindent\textbf{MSC 2020:}
05D05; 05D10; 06A07.
\end{abstract}
\section{Introduction}

In 1975, Erd\H{o}s and Shelah \cite{Er75,Er81} introduced the following graph-coloring refinement of the classical Ramsey problem.

\begin{definition}
Let $p$ and $q$ be integers with $1 \le q \le \binom{p}{2}$. A \emph{$(p,q)$-coloring} of the complete graph $K_n$ is an edge-coloring of $K_n$ in which every copy of $K_p$ receives at least $q$ distinct colors on its edges.
\end{definition}

\begin{definition}
For integers $p$ and $q$ with $1 \le q \le \binom{p}{2}$, let $f(n,p,q)$ denote the minimum number of colors in a $(p,q)$-coloring of $K_n$.
\end{definition}

If $r_k(p)$ is the usual $k$-color Ramsey number, then
\[
f(n,p,2)=k
\text{ if and only if }
r_k(p)>n \text{ and } r_{k-1}(p)\le n.
\]
Indeed, $r_k(p)>n$ is equivalent to the existence of a $k$-edge-coloring of $K_n$ with no monochromatic $K_p$, while $r_{k-1}(p)\le n$ says that $(k-1)$ colors do not suffice.

The graph Erd\H{o}s-Gy\'{a}rf\'{a}s problem has been studied extensively. Mubayi \cite{Mu98} proved that $f(n,4,3)\le e^{O(\sqrt{\log n})}$ by an explicit construction. Eichhorn and Mubayi \cite{EM00} extended this to show the same type of upper bound for $f(n,p,2\ceil{\log p}-2)$ for every $p\ge5$. Erd\H{o}s and Gy\'{a}rf\'{a}s \cite{EG97} determined the threshold values of $q$ for which $f(n,p,q)$ is linear, quadratic, or asymptotic to $\binom{n}{2}$. Further developments can be found in \cite{BEHK23,CFLS152,CH23,EG97,EM00,Mu98,Yip26}.

A \emph{Boolean lattice of dimension $n$}, denoted $B_n=2^{[n]}$, is the power-set lattice of an $n$-element set ordered by inclusion. Poset Ramsey problems in Boolean lattices have attracted significant interest in recent years; see \cite{AW23,GKM,KMOW25,GMT23,BP2021,Winter25,Winter23}.

There is also an important extremal-set-theoretic precursor to the present $t$-chain coloring problem.  Write $C_s$ for the $s$-element chain.  Katona \cite{Katona73} determined the maximum number of $2$-chains in a $C_3$-free family, in our notation the weak parameter $\LaT_2(n,C_3)$.  Gerbner and Patk\'{o}s \cite{GerbnerPatkos08} subsequently determined the corresponding chain case $\LaT_t(n,C_{t+k})$ for all positive integers $t$ and $k$, using $l$-chain profile vectors.  Gerbner, Keszegh, and Patk\'{o}s \cite{GerbnerKeszeghPatkos20} introduced the more general problem of maximizing the number of copies of one poset $Q$ inside a family avoiding another poset $P$, and treated several small cases.  For the chain-copy specialization, Gerbner, Methuku, Nagy, Patk\'{o}s, and Vizer \cite{GerbnerMethukuNagyPatkosVizer19} proved a dichotomy analogous in spirit to graph Tur\'{a}n theory: if the height $h(P)$ of the forbidden poset is larger than $t$, then the order of magnitude of the maximum number of $t$-chains in a $P$-free family is the same as in the chain-forbidden case $C_{t+1}$; if $h(P)\le t$, then the order of magnitude is smaller.  Balogh, Martin, Nagy, and Patk\'{o}s \cite{BaloghMartinNagyPatkos22} further analyzed the height-two case for $t=2$, including an exact result for the butterfly poset.  These works consider weak, non-induced forbidden copies; for forbidden chains the weak and induced notions coincide.  The present paper develops a complementary Ramsey-coloring viewpoint, with special emphasis on strong, induced Boolean-lattice copies.

It is natural to formulate the Erd\H{o}s-Gy\'{a}rf\'{a}s problem for Boolean lattices.

\begin{definition}\label{def:pqtcolor}
Let $p,q,t$ be integers with $1 \le t \le p$ and $1 \le q \le h_p(t)$, where $h_p(t)$ is the number of $t$-chains in $B_p$. A \emph{$(p,q,t)$-coloring} of $B_n$ is a coloring of the $t$-chains of $B_n$ such that every copy of $B_p$ in $B_n$ uses at least $q$ colors on its $t$-chains. A \emph{strong $(p,q,t)$-coloring} is defined analogously with ``copy'' replaced by ``induced copy''.
\end{definition}

\begin{definition}\label{def:ftnpq}
Let $p,q,t$ be as above. We write $f_t(n,p,q)$ for the minimum number of colors in a $(p,q,t)$-coloring of $B_n$, and $f_t^\sharp(n,p,q)$ for the minimum number of colors in a strong $(p,q,t)$-coloring of $B_n$.
\end{definition}

For $q=2$, the strong coloring function is equivalent to the strong Boolean Ramsey number for $t$-chains. Let
$\mathcal B=\{B_n:n\ge1\}$.
Then
\begin{equation}\label{eq:ft-Ramsey-equivalence}
f_t^\sharp(n,p,2)=k
\text{ if and only if }
\BRs_{k,t}(\mathcal B\mid B_p)>n
\text{ and }
\BRs_{k-1,t}(\mathcal B\mid B_p)\le n.
\end{equation}

In this paper we study the strong Boolean-lattice Erd\H{o}s-Gy\'{a}rf\'{a}s function and its Ramsey-theoretic consequences. Our results are organized as follows.

In Section~\ref{sec_2-3}, we establish several basic properties of the Erd\H{o}s-Gy\'{a}rf\'{a}s functions for posets, including monotonicity relations and comparisons between the weak and strong versions.

We then turn to the existence problem for strong Boolean Ramsey numbers. We show that every finite poset strongly embeds into a Boolean lattice. Combined with a structural Ramsey theorem for finite posets with linear extensions, this yields the existence of the strong Boolean Ramsey number
$\mathrm{R}^{\sharp}_{k,t}(\mathcal{B}\mid Q)$
for every integer $k\ge1$, every $t\ge1$, and every nonempty finite poset $Q$; see Section~\ref{sec6}. In particular, this gives an affirmative answer to a problem of Cox and Stolee and implies the existence of $f_t^{\sharp}(n,p,2)$.

In Section~\ref{sec3}, using the symmetric Lov\'asz local lemma (Theorem~\ref{th-LLL-SV}), we prove the following upper bound.

\begin{theorem}\label{th-upper-Pr}
Let \(p, q, t\) be positive integers with $1 \le t \le p$ and $1 \le q \le h_p(t)$, where \(h_p(t)\) is defined in Proposition~\ref{th-La2}. For all integers \(n \ge p\), there exists a constant \(c=c(p,q,t)\) such that
\[
f_t^{\sharp}(n,p,q)\le
c\cdot 2^{\frac{(n-p)\binom{p}{\lfloor p/2\rfloor}(1+o(1))}{h_p(t)-q+1}}.
\]
\end{theorem}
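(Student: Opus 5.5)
Color every $t$-chain of $B_n$ independently and uniformly at random from a palette of $k$ colors. For each induced copy $S$ of $B_p$ in $B_n$, let $A_S$ be the bad event that the $h_p(t)$ $t$-chains of $S$ receive at most $q-1$ distinct colors. We will apply the symmetric Lov\'asz local lemma (Theorem~\ref{th-LLL-SV}). If $ep(d+1)\le1$, where $p$ bounds each $\Pr(A_S)$ and $d$ bounds the number of events on which a given $A_S$ depends, then with positive probability no bad event occurs. This gives a strong $(p,q,t)$-coloring with $k$ colors.

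First we bound $\Pr(A_S)$. Choose the at most $q-1$ colors used, in at most $\binom{k}{q-1}$ ways, and require that all $h:=h_p(t)$ chains take colors from this set. This gives
\[
\Pr(A_S)\le \binom{k}{q-1}\Big(\frac{q-1}{k}\Big)^{h}\le c_1\,k^{-(h-q+1)},
\]
where $c_1=c_1(p,q,t)$.

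Second we bound $d$. The event $A_S$ is mutually independent of all $A_{S'}$ such that $S$ and $S'$ share no $t$-chain. A necessary condition for dependence is that $S$ and $S'$ share an element, so it suffices to bound, for a fixed set $X\subseteq[n]$, the number $N(X)$ of induced copies of $B_p$ in $B_n$ containing $X$. Then $d\le 2^p\max_X N(X)$.

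This counting step is the main obstacle. An induced copy of $B_p$ is determined by the images of its atoms and bottom, and there are many of them, so I need the counting in a form that exposes the exponent $(n-p)\binom{p}{\lfloor p/2\rfloor}$. The plan is to encode a copy by a map $\phi:[n]\to\{0,1,\dots\}$ indicating, for each ground element, which "coordinate block" it belongs to, together with the position of $X$ within the copy. The goal is to show $\max_X N(X)\le 2^{(n-p)\binom{p}{\lfloor p/2\rfloor}(1+o(1))}$, up to a factor depending only on $p$. I expect the largest middle layer of $B_p$ to control how many ways the elements outside a fixed $p$-element skeleton can be distributed so that $X$ lies in the copy. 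I would first try to prove the crude bound in which each of the $n-p$ free elements has at most $2^{\binom{p}{\lfloor p/2\rfloor}(1+o(1))}$ admissible choices, after the $p$ "essential" elements determining $X$'s position have been fixed. Here the $o(1)$ term absorbs polynomial factors in $n$ and the $2^p$ choices of which vertex of the copy equals $X$. If this per-element bound is too generous or too weak, I would instead bound $N(X)$ via the number of antichain-like configurations of size $\binom{p}{\lfloor p/2\rfloor}$ (Sperner) containing or contained in $X$.

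Finally we combine the two bounds. The local lemma condition becomes
\[
e\,c_1k^{-(h-q+1)}\,(2^pD+1)\le 1,\qquad D:=2^{(n-p)\binom{p}{\lfloor p/2\rfloor}(1+o(1))}.
\]
This holds when
\[
k\ge c\cdot 2^{\frac{(n-p)\binom{p}{\lfloor p/2\rfloor}(1+o(1))}{h_p(t)-q+1}}
\]
for a suitable constant $c=c(p,q,t)$. Taking $k$ equal to this quantity gives $f_t^\sharp(n,p,q)\le k$. The case $q=1$ is trivial, but the formula covers it anyway.
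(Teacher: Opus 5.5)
Your framework matches the paper's: a uniform random coloring of $\mathcal T_n(t)$, the bad events $A_S$, the bound $\Pr[A_S]\le c_1K^{-(h_p(t)-q+1)}$, and the symmetric local lemma with the same final algebra. Your coarser dependency graph, which joins $S,S'$ when they share an element rather than a $t$-chain, is also valid. The gap is exactly the step you flag as the main obstacle. The bound $\max_X N(X)\le 2^{(n-p)\binom{p}{\lfloor p/2\rfloor}(1+o(1))}$ is never proved, and it carries the entire content of the exponent.

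Your sketch of that count also starts from false premises.
\begin{itemize}
\item An induced copy of $B_p$ is not determined by its bottom and atoms. Already for $p=2$, the top can be any superset of the union of the two atoms.
\item An encoding by ``coordinate blocks'' only captures the cube-like copies $\{X_0\cup\bigcup_{i\in A}Y_i : A\subseteq[p]\}$. There are at most $(p+2)^n$ of these, far fewer (for $p\ge2$) than the roughly $a(p)^n$ induced copies, where $a(p)$ is the number of antichains, equivalently up-sets, of $B_p$.
\end{itemize}
The correct per-element encoding assigns to each $x\in[n]$ the up-set $\{A\subseteq[p]: x\in f(A)\}$. Strongness forces, for each $i$, some $x$ whose up-set is the principal up-set $\{A: i\in A\}$. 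That leaves at most $a(p)$ choices for each of the other $n-p$ elements.

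The estimate $\log_2 a(p)=\binom{p}{\lfloor p/2\rfloor}(1+O(\log p/p))$ is the Kleitman--Markowsky theorem on Dedekind numbers, and Sperner alone does not give it. Bounding antichains only by their size $M=\binom{p}{\lfloor p/2\rfloor}$ yields $\sum_{j\le M}\binom{2^p}{j}=2^{\Theta(M\log p)}$ choices per element. That exponent is too large by a factor of order $\log p$.

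You do not need to redo any of this. Every copy counted by $N(X)$ is an induced copy of $B_p$ in $B_n$, so $N(X)\le g(p,n)$. Theorem~\ref{number_b_n} then gives $g(p,n)=2^{(n-p)\binom{p}{\lfloor p/2\rfloor}(1+o(1))}$; it combines the Axenovich--Walzer bound $g(m,N)\le\frac{N!}{(N-m)!}a(m)^{N-m}$ with Kleitman--Markowsky. This is what the paper does, bounding the degree by $h_p(t)\,g(p,n)$. You would get $d\le 2^p g(p,n)$, which is equally good.

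Localizing at a fixed $X$ cannot help at this precision anyway. Since $\sum_X N(X)=2^p g(p,n)$, averaging gives $\max_X N(X)\ge 2^{p-n}g(p,n)$. So you save at most one unit per free element in the exponent $\log_2 a(p)$. With this substitution your argument is complete and coincides with the paper's.
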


In Section~\ref{sec4}, we derive lower bounds for the strong Boolean-lattice Erd\H{o}s-Gy\'{a}rf\'{a}s function by combining Tur\'an-type extremal estimates for $t$-chains, a double-counting argument, and a generalized Lubell-type framework. Our main result is the following.

\begin{theorem}\label{th123}
Let $n\ge 2p+1$, let $1\le t\le p$, and let $2\le q\le h_p(t)$. Then
\[
f_t^\sharp(n,p,q)
\ge
\max\left\{
\ceil{\frac{h_n(t)}{h_n(t)-\floor{\frac{n}{p+1}}}},
\ceil{\frac{h_n(t)\,C_{\min}(p,n,t;\mathcal T_n(t))}{g(p,n)\,h_p(t)}}
\right\}.
\]
\end{theorem}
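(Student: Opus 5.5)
Both terms of the maximum come from the same pigeonhole principle. Fix a strong $(p,q,t)$-coloring of $B_n$ with $k$ colors. Because $q\ge 2$, no induced copy of $B_p$ has all of its $t$-chains in one color. So each color class $\mathcal C_i$ is a family of $t$-chains of $B_n$ that contains no complete set of $t$-chains of an induced copy of $B_p$. Since $\sum_i|\mathcal C_i|=h_n(t)$, some class satisfies $|\mathcal C_i|\ge h_n(t)/k$. Therefore $k\ge h_n(t)/M$, where $M$ is any upper bound on the size of a single such ``monochromatic-copy-free'' class. The two terms of the maximum correspond to two different bounds on $M$, and taking ceilings gives the claim.

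\textbf{First term.} Here I would show that every admissible class misses at least $\lfloor n/(p+1)\rfloor$ of the $t$-chains. This gives $M\le h_n(t)-\lfloor n/(p+1)\rfloor$, and hence $k\ge h_n(t)/(h_n(t)-\lfloor n/(p+1)\rfloor)$.

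To get there, I would first partition $[n]$ into $\lfloor n/(p+1)\rfloor$ disjoint blocks $A_1,\dots,A_m$ of size $p+1$; the hypothesis $n\ge 2p+1$ guarantees $m\ge 1$, and in fact we need each block to have room. Inside each block I would take a subset $X_j\subseteq A_j$ of size $p$ and the copy $\{S\cup Y: Y\subseteq X_j\}$ for a fixed base set $S$. This copy is induced, since it is an interval of $B_n$.

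The aim is to produce $m$ induced copies of $B_p$ whose sets of $t$-chains are pairwise disjoint. To make this work, I would choose base sets $S_j$ so that the intervals $[S_j,S_j\cup X_j]$ are pairwise disjoint, for instance $S_j=\bigcup_{i<j}A_i$. I must check that disjointness actually holds: the interval for $j$ consists of sets containing $A_1\cup\dots\cup A_{j-1}$ and avoiding $A_{j+1},\dots$, so two different intervals are disjoint as families of sets. Their $t$-chain sets are then disjoint too. Each admissible class must omit at least one $t$-chain from each of these copies, so it omits at least $m$ chains.

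\textbf{Second term.} This uses the Lubell-type framework. I would count pairs $(\mathcal K,c)$ in which $\mathcal K$ is an induced copy of $B_p$ taken from a suitable symmetric family of copies (the $g(p,n)$ copies arising in the Lubell-type setup), and $c\in\mathcal C_i$ is a $t$-chain of $\mathcal K$.

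\begin{itemize}
\item \emph{Upper bound.} No copy is fully contained in $\mathcal C_i$, so each copy contributes at most $h_p(t)-1<h_p(t)$ pairs. The total is at most $g(p,n)\,h_p(t)$.
\item \emph{Lower bound.} Every $t$-chain of $B_n$ lies in at least $C_{\min}(p,n,t;\mathcal T_n(t))$ of the copies, by the earlier definition of $C_{\min}$ as a minimum covering multiplicity. So the count is at least $|\mathcal C_i|\,C_{\min}$.
\end{itemize}

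Together these give $|\mathcal C_i|\le g(p,n)h_p(t)/C_{\min}$. Substituting into $k\ge h_n(t)/|\mathcal C_i|$ yields the second term.

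\textbf{Main obstacle.} The hardest step is the disjoint-copies construction in the first term, in particular making sure the intervals are genuinely pairwise disjoint. The second term is routine once the earlier definitions of $g$ and $C_{\min}$ are taken as given.
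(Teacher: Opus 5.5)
Your proposal is correct and follows the paper's proof. You pigeonhole onto a largest color class, which is strongly $(t,B_p)$-free, and bound its size in two ways:
\begin{itemize}
\item via $\lfloor n/(p+1)\rfloor$ pairwise disjoint induced copies of $B_p$, as in Theorem~\ref{th-Lat}, where the paper uses the copies $\{\{d_k\}\cup A: A\subseteq J_k\setminus\{d_k\}\}$ instead of your stacked intervals;
\item by double counting incidences over all $g(p,n)$ induced copies, as in Lemma~\ref{lem-corrected-tchain-concentration}. No special ``Lubell-type'' family is involved, since $g$ and $C_{\min}$ refer to all induced copies.
\end{itemize}
One small fix: disjointness of your intervals really rests on $X_i\subsetneq A_i$, so that members of the $i$-th interval do not contain $A_i$ while all members of later intervals do. The description ``contains $A_1\cup\dots\cup A_{j-1}$ and avoids $A_{j+1},\dots$'' alone does not separate consecutive intervals: the set $A_1\cup\dots\cup A_{j-1}$ satisfies it for both the $(j-1)$-st and the $j$-th interval.
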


Here $h_n(t)$ denotes the number of $t$-chains in $B_n$ (Proposition~\ref{th-La2}), $g(m,N)$ denotes the number of induced copies of $B_m$ in $B_N$, and Theorem~\ref{number_b_n} provides an upper bound on $g(m,N)$. Moreover, \(C_{\min}(p,n,t;\mathcal T)\) denotes the minimum number of induced copies of \(B_p\) in \(B_n\) containing a given \(t\)-chain in \(\mathcal T\subseteq \mathcal T_n(t)\) (Lemma~\ref{lem-corrected-tchain-concentration}). For some parameter choices, this bound is attained.

Finally, in Section~\ref{sec5}, we apply the upper bound on $f_t^{\sharp}(n,p,2)$ to obtain new lower bounds for strong Boolean Ramsey numbers. In particular, we show that our bound in Corollary~\ref{cor-R-lower} strictly improves, for sufficiently large $k$, the uniform lower bound of Katona et al.~\cite{KMOW25} stated in Corollary~\ref{cor-lower-Boolean-uniform}; see Proposition~\ref{pro-compa}.

\section{Preliminaries}

A \emph{partially ordered set} (abbreviated \emph{poset}) is a pair $(P, \leq)$, where $P$ is a non-empty set (the \emph{ground set}) and $\leq$ is a binary relation on $P$ satisfying three axioms: reflexivity ($x \leq x$ for all $x \in P$), anti-symmetry (if $x \leq y$ and $y \leq x$ for $x, y \in P$, then $x = y$), and transitivity (if $x \leq y$ and $y \leq z$ for $x, y, z \in P$, then $x \leq z$).
When the order relation $\leq$ is clear from context, we denote the poset simply by $P$.
Two distinct elements $x, y \in P$ are \emph{comparable} if $x \leq y$ or $y \leq x$; otherwise, they are \emph{incomparable}. A \emph{$t$-chain} in $P$ is a totally ordered subset of $P$ of size $t$ (equivalently, a set of $t$ distinct elements where every pair is comparable).
The \emph{$n$-antichain}, denoted $A_n$, is the poset on $n$ elements where every pair of distinct elements is incomparable.

For a poset $P$, $|P|$ denotes the cardinality of $P$;
The \emph{height} $h(P)$ of $P$ is the maximum size of a chain in $P$.
For posets $P$ and $Q$, an injection $f: P \to Q$ is a \emph{weak embedding} if $f(x) \leq f(y)$ in $Q$ whenever $x \leq y$ in $P$; in this case, $f(P)$ is a \emph{copy} of $P$ in $Q$; An injection $f: P \to Q$ is a \emph{strong embedding} if $f(x) \leq f(y)$ in $Q$ if and only if $x \leq y$ in $P$; in this case, $f(P)$ is an \emph{induced copy} of $P$ in $Q$.

A family $\mathcal F\subseteq B_n$ is always viewed as the induced subposet of $B_n$ under set inclusion. For any poset $R$, let
$A_t(R)=\{\text{all }t\text{-chains of }R\}$ and $h(R,t)=|A_t(R)|$.
In particular, we write
$\mathcal T_n(t)=A_t(B_n)$ and $h_n(t)=h(B_n,t).$
Finally, $g(p,n)$ denotes the number of induced copies of $B_p$ in $B_n$.

A \emph{$k$-coloring of the $t$-chains} of a poset is a function assigning one of $k$ colors (typically labeled $1, 2, \dots, k$) to each $t$-chain of the poset. For $t = 1$, this reduces to a $k$-coloring of the elements of the poset.
A colored poset $P$ is \emph{monochromatic} with respect to a $t$-chain coloring if all $t$-chains of $P$ are assigned the same color.

Katona et al. \cite{KMOW25} determined the exact number for $h_n(t)$ as follows.
\begin{proposition}{\upshape \cite{KMOW25}}\label{th-La2}
For positive integers \(p \geq t\), the number of \(t\)-chains in \(B_p\) is
\begin{equation*}
h_p(t)=\sum_{j=0}^{t-1}(-1)^j\binom{t-1}{j}(t+1-j)^p = \Theta((t + 1)^p).
\end{equation*}
\end{proposition}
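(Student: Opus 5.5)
The statement to prove is Proposition~\ref{th-La2}: the number of $t$-chains in $B_p$ is given by the alternating sum and is $\Theta((t+1)^p)$.

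The plan is to encode a $t$-chain $A_1\subsetneq A_2\subsetneq\cdots\subsetneq A_t$ of $B_p$ by a function $\phi:[p]\to\{0,1,\dots,t\}$. For each element $x$, let $\phi(x)$ be the number of sets $A_i$ not containing $x$. Equivalently, $\phi(x)=i$ when $x$ first enters at level $i+1$, with $\phi(x)=t$ if $x\notin A_t$ and $\phi(x)=0$ if $x\in A_1$. This is a bijection between weakly increasing $t$-tuples of subsets and all $(t+1)^p$ functions $[p]\to\{0,\dots,t\}$.

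Strictness $A_i\neq A_{i+1}$ for $i=1,\dots,t-1$ means exactly that the value $t-i$ is attained by $\phi$. So the $t$-chains correspond to functions whose image contains each of the $t-1$ "interior" values $1,\dots,t-1$. The values $0$ and $t$ are unconstrained, since $A_1=\emptyset$ and $A_t=[p]$ are allowed.

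Next I apply inclusion--exclusion over the set $S\subseteq\{1,\dots,t-1\}$ of interior values that are missed. If $|S|=j$, the number of functions avoiding $S$ is $(t+1-j)^p$. This gives
\[
h_p(t)=\sum_{j=0}^{t-1}(-1)^j\binom{t-1}{j}(t+1-j)^p .
\]
The hypothesis $p\ge t$ is only needed to guarantee that $h_p(t)>0$. The identity itself holds for all $p$.

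For the asymptotics, the upper bound $h_p(t)\le (t+1)^p$ is immediate from the injection into all functions. For the lower bound, the $j=0$ term dominates: for fixed $t$, the remaining terms total at most $2^{t-1}t^p=o((t+1)^p)$, so $h_p(t)=(1+o(1))(t+1)^p$. A bound uniform in $p\ge t$ also follows directly. Fix $\phi$ on $t-1$ chosen elements to hit $1,\dots,t-1$, and choose $\phi$ freely on the remaining $p-t+1$ elements. This gives $h_p(t)\ge (t+1)^{p-t+1}$, which is $\Theta((t+1)^p)$ with a constant depending on $t$.

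The only delicate point is getting the bijection and the strictness condition right. In particular, one must check that the endpoint values $0$ and $t$ are free, which is what produces $t-1$ rather than $t+1$ in the binomial coefficient. After that the computation is routine.
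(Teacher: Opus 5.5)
The paper gives no proof of this proposition; it is imported from the work of Katona et al., so there is no internal argument to compare against. Your proof is correct and self-contained. The encoding $A_i=\{x:\phi(x)<i\}$ is a bijection between weakly nested $t$-tuples and functions $[p]\to\{0,\dots,t\}$. Strictness of the chain is equivalent to $\phi$ attaining every interior value in $\{1,\dots,t-1\}$. Inclusion--exclusion over the missed interior values then gives exactly the stated alternating sum, and your bounds $(t+1)^{p-t+1}\le h_p(t)\le (t+1)^p$ establish the $\Theta((t+1)^p)$ claim. Equivalently, you are counting ordered partitions of $[p]$ into $t+1$ blocks whose $t-1$ middle blocks are nonempty, which is what the shape of the formula reflects.

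There is one small indexing slip. With your convention, where $\phi(x)$ counts the sets \emph{not} containing $x$, one has $A_{i+1}\setminus A_i=\phi^{-1}(i)$. So $A_i\neq A_{i+1}$ exactly when the value $i$, not $t-i$, is attained. This is harmless because both index sets equal $\{1,\dots,t-1\}$.

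Your side remark about the hypothesis is also slightly off. Positivity $h_p(t)>0$ already holds for all $t\le p+1$, since maximal chains exist. So $p\ge t$ is a convenient assumption rather than the exact positivity threshold.
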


Axenovich and Walzer \cite{AW17} gave the asymptotic behavior of the number of strong embeddings as
$$
\frac{N !}{(N-m) !}(a(m)-m)^{N-m} \leq g(m, N) \leq \frac{N !}{(N-m) !} a(m)^{N-m}.
$$
Since it is known that $a(m)=2^{{m\choose \left\lfloor m/2\right\rfloor}(1+O(\log_2 m/m))}$
(see \cite{KM}) and $\frac{N !}{(N-m) !}\leq N^m=2^{m\log_2 N}$,
we obtain the following useful bound.
\begin{theorem}[\cite{AW17}, Count of Strong Embeddings]\label{number_b_n}
Let $m,N$ be two integers with $m \leq N$.
Then
$$
g(m, N) = 2^{(N-m){m\choose \left\lfloor m/2\right\rfloor}(1+o(1))}
<
2^{2{m\choose \left\lfloor m/2\right\rfloor}(N-m)}.
$$
\end{theorem}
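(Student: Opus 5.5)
The statement to prove is Theorem~\ref{number_b_n}: the equality $g(m,N)=2^{(N-m)\binom{m}{\lfloor m/2\rfloor}(1+o(1))}$ and the strict upper bound by $2^{2\binom{m}{\lfloor m/2\rfloor}(N-m)}$. I would derive both directly from the Axenovich--Walzer sandwich
\[
\frac{N!}{(N-m)!}(a(m)-m)^{N-m}\le g(m,N)\le \frac{N!}{(N-m)!}a(m)^{N-m},
\]
together with the Kleitman--Markowsky estimate $\log_2 a(m)=\binom{m}{\lfloor m/2\rfloor}(1+O(\log_2 m/m))$. Throughout, write $\beta=\binom{m}{\lfloor m/2\rfloor}$ and take logarithms base $2$.

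For the upper side I would bound $\frac{N!}{(N-m)!}\le N^m=2^{m\log_2N}$, so that
\[
\log_2 g(m,N)\le m\log_2N+(N-m)\beta(1+O(\log m/m)).
\]
For the lower side I would use $\frac{N!}{(N-m)!}\ge1$ and $a(m)-m\ge a(m)/2$, which holds because $a(m)$ grows doubly exponentially in $m$ (for $m\ge 3$, say). This gives
\[
\log_2 g\ge (N-m)(\log_2 a(m)-1)=(N-m)\beta(1+O(\log m/m)).
\]
The asymptotic equality then follows once the extra term $m\log_2 N$ is shown to be $o((N-m)\beta)$.

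That is the main obstacle, because the regime of the $o(1)$ is not specified. If $N=m$ the term $m\log_2 N$ is not absorbed at all, and if $N-m$ stays bounded while $m$ is fixed it is not absorbed either. I would therefore state the asymptotics as $m\to\infty$ with $N\ge m+1$ and $N$ at most exponential in $\beta$. In that range $m\log_2 N\le m\cdot O(\beta)$ is compared against $(N-m)\beta\ge\beta$. Since $\beta\ge 2^m/(m+1)$ dominates $m\log_2N$ whenever $\log_2 N=o(\beta/m)$, I would add this hypothesis, and in the paper's intended use $N=n$ is fixed relative to $p$ anyway. I would note explicitly that this interpretation of $o(1)$, uniform as $m\to\infty$ for $m<N\le 2^{o(\beta/m)}$, is what the paper needs.

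For the explicit strict inequality I would not rely on asymptotics, but instead check directly that
\[
m\log_2N+(N-m)\log_2 a(m)<2\beta(N-m).
\]
This needs the crude bound $a(m)\le 3^{\beta}$, which holds since every antichain lies in the family of subsets of $\binom{[m]}{\le m}$ and the known estimate gives $\log_2 a(m)\le(1+\varepsilon)\beta$ for $m$ beyond a threshold. It also needs $m\log_2N<(1-\varepsilon)\beta(N-m)$, which again requires $N>m$. For small $m$ this can be checked by hand using $a(1)=3$, $a(2)=6$, $a(3)=20$. For $N=m$, where $g=1$, the right-hand side equals $1$ and the strict inequality fails, so I would either restrict to $N>m$ or replace the strict inequality by $\le$. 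This small-case and edge-case bookkeeping is the part I expect to need care.
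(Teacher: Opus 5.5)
Your route is the paper's own. It combines the Axenovich--Walzer sandwich, the Kleitman--Markowsky estimate for $a(m)$, and $N!/(N-m)!\le N^m$, without saying where the $o(1)$ lives or checking the strict inequality. So you are right that bookkeeping is needed, and right that $N=m$ fails.

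\textbf{The gap is in the small cases.} The cases you plan to ``check by hand'' are actually false, so restricting to $N>m$ does not rescue the strict bound, and neither does replacing $<$ by $\le$. Recall that a strong embedding $\phi\colon B_m\to B_N$ amounts to assigning to each coordinate $j\in[N]$ the up-set $\{X\subseteq[m]: j\in\phi(X)\}$ so that every principal up-set $\{X: i\in X\}$ occurs; this is where the sandwich comes from.
\begin{itemize}
\item For $m=1$, $g(1,N)=3^N-2^N$ (the comparable pairs), and $5>4$, $19>16$, $65>64$ for $N=2,3,4$.
\item For $(m,N)=(3,4)$, either one coordinate carries one of the $20-3=17$ non-principal up-sets, giving $4\cdot 17=68$ copies. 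Or all four coordinates carry principal up-sets, giving $6$ copies, one per partition of $[4]$ into three blocks. So $g(3,4)=74>2^{6}$.
\end{itemize}
Also, the sandwich counts embeddings (maps), while $g$ counts copies. They differ by $|\mathrm{Aut}(B_m)|=m!$, and that factor is decisive here: counting maps, even $(2,3)$ fails, since $30>16$.

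What is true is the following. For $m\ge 4$ and all $N>m$, use Hansel's bound $a(m)\le 3^{\beta}$. It comes from a symmetric chain decomposition; your stated justification does not prove it. Combined with $\binom{N}{m}\le (m+1)^{N-m}$, it gives $g(m,N)\le\bigl((m+1)3^{\beta}\bigr)^{N-m}<4^{\beta(N-m)}$, because $m+1<(4/3)^{\beta}$. For $m\le 3$, a direct count shows the only failures with $N>m$ are $(1,2),(1,3),(1,4),(3,4)$.

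\textbf{On the asymptotic part.} Your extra hypothesis $\log_2 N=o(\beta/m)$ is unnecessary. Write $N=m+k$ with $k\ge 1$. Then $m\log_2 N\le m\log_2 m+mk$, so $m\log_2 N/((N-m)\beta)\le (m\log_2 m+m)/\beta\to 0$ uniformly in $N>m$ as $m\to\infty$.

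However, your claim that this is the regime the paper needs is mistaken. Theorem~\ref{th-upper-Pr} and Corollary~\ref{cor-R-lower} use the estimate with $m=p$ fixed and $N=n\to\infty$. There $\log_2 g(m,N)/(N-m)\to\log_2 a(m)$, which differs from $\beta$ for every fixed $m$ (e.g.\ $\log_2 6\neq 2$ for $m=2$). So the $(1+o(1))$ form can only be read as $m\to\infty$. For fixed $m$, what is available is the exponent $(N-m)\log_2 a(m)+O(m\log N)$, or the crude exponent $2\beta(N-m)$ once $N$ is large.
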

\subsection{Ramsey theory on posets}
Ramsey theory is a branch of combinatorics that investigates the conditions under which a combinatorial object necessarily contains given smaller substructures \cite{Ramsey30}. It has foundational applications across diverse mathematical and computational disciplines, including algebra, geometry, mathematical logic, ergodic theory, theoretical computer science, information theory, number theory, and set theory. Comprehensive surveys of the field can be found in \cite{GRS90, Rosta04}.

Ramsey theory on posets was initiated by Ne\v{s}et\v{r}il and R\"{o}dl \cite{NesetrilRodl}, who focused on induced copies of posets. Kierstead and Trotter \cite{KiersteadTrotter} extended this line of research by considering general posets as host structures (instead of restricting to Boolean lattices) and investigating Ramsey-type problems via key poset parameters: cardinality, height, and width. For a comprehensive overview of related work, we refer the reader to \cite{DKT91, KiersteadTrotter, McColm, TrotterRamsey}. Recently, several authors \cite{AW17, CS18, GundersonRodlSidorenko, JohnstonLuMilans, LT22} have focused on identifying induced copies of posets within Boolean lattices.

\begin{definition}\label{def-BRn}
Let $\mathcal{B} = \{B_n : n \geq 1\}$ denote the family of Boolean lattices. The \textit{Boolean Ramsey number} $\operatorname{R}(\mathcal{B}\,|\,P, Q)$ is the smallest $n$ such that any red/blue-coloring of $B_n$ contains either a red copy of $P$ or a blue copy of $Q$.
\end{definition}

The Ramsey number $\operatorname{R}(\mathcal{B}\,|\,B_s, B_t)$ has been extensively studied \cite{AW17, LT22, CS18, GMT23, BP2021, Walzer15}. Axenovich and Winter \cite{AW23} studied $\operatorname{R}(\mathcal{B}\,|\, P, B_t)$ for a fixed poset $P$ and $t$-dimensional Boolean lattice $B_t$ as $t$ grows. Winter investigated Boolean Ramsey numbers of Boolean lattices versus complete multipartite posets, parallel chain compositions, and $N$-shaped posets \cite{Winter23, WinterIII, WinterII}, and Erdős-Hajnal problems for posets \cite{Winter25}.

Rainbow generalizations of these Ramsey numbers are studied in \cite{CGLMNPV22, CCLL20, Patkos20}.

Griggs, Stahl, and Trotter \cite{GST84} first investigated $t$-chain structures in 1984, and their generalized Lubell inequality provides the foundation for our $t$-chain Lubell function framework.
Cox and Stolee \cite{CS18} introduced a definition of the poset Ramsey number framed in the context of pographs and weak embeddings, with Boolean lattices as the host posets under consideration. Katona et al. \cite{KMOW25} introduced a definition formulated for general posets and accommodates both strong and weak embeddings.

\begin{definition}\label{Defi-Wposet}
For a family $\mathcal{Q}=\{Q_{n} : n\geq 1\}$ of host posets
such that $Q_n \subseteq Q_{n+1}$ and $|Q_n|<|Q_{n+1}|$ for each $n$,
and given $k$ posets $P_1,P_2,\ldots,P_k$,
the \emph{weak poset Ramsey number
for $t$-chains},
denoted by $\operatorname{R}_{k,t}(\mathcal{Q}\,|\,P_1,P_2,\ldots,P_k)$,
is the smallest number $N$
such that for any $k$-coloring of $t$-chains in $Q_N \in \mathcal{Q}$,
there is a monochromatic copy of $P_i$ in color $i$ for some $1\leq i\leq k$.
\end{definition}

We simply write $\operatorname{R}_{k,t}(\mathcal{Q}\,|\,P)$ for $\operatorname{R}_{k,t}(\mathcal{Q}\,|\,P_1,P_2,\ldots,P_k)$ if $P_1=\cdots=P_k=P$.
If $t=1$, then we write $\operatorname{R}_{k}(\mathcal{Q}\,|\,P_1,P_2,\ldots,P_k)$. Furthermore, if $t=1$ and $k=2$, then we write $\operatorname{R}(\mathcal{Q}\,|\,P_1,P_2)$.

We analogously define the \emph{strong poset Ramsey number} $\operatorname{R}_{k,t}^{\sharp}(\mathcal{Q}\,|\,P_1,P_2,\ldots,P_k)$ for induced copy of $P_i$.
In addition,
we use similar abbreviations such as
$\operatorname{R}_{k,t}^{\sharp}(\mathcal{Q}\,|\,P)$,
$\operatorname{R}_{k}^{\sharp}(\mathcal{Q}\,|\,P_1, P_2, \dots , P_k)$,
and
$\operatorname{R}^{\sharp}(\mathcal{Q}\,|\,P_1, P_2)$.
Recall that $\mathcal{B}=\{B_{n} : n\geq 1\}$ denotes
the family of Boolean lattices.
Then
$\operatorname{R}_{k,t}(\mathcal{B}\,|\,P_1,P_2,\ldots,P_k)$
is called the \emph{Boolean Ramsey number}, see \cite{CS18}.
Similarly,
the variation of strong embeddings has been considered:
in \cite{AW17, LT22},
$\operatorname{R}_{k,t}^{\sharp}(\mathcal{B}\,|\,P_1,P_2,\ldots,P_k)$
is called simply
the \emph{strong Boolean Ramsey number}. We simply write $\operatorname{R}_{k,t}(P)$ for $\operatorname{R}_{k,t}(\mathcal{B}\,|\,P_1,P_2,\ldots,P_k)$ if $P_1=\cdots=P_k=P$.
Let $\mathcal{C}=\{C_{n} : n\geq 1\}$ be a
family of chains,
where $C_n$ denotes the $n$-chain.
Then
$\operatorname{R}_{k,t}(\mathcal{C}\,|\,P_1,P_2,\ldots,P_k)$
is called \emph{the chain Ramsey number},
see
\cite{CS18}.

As mentioned in \cite{KMOW25},
these concepts are extension of some known poset Ramsey problems, like
Boolean Ramsey number (Definition \ref{def-BRn}), chain Ramsey number (see
\cite{CS18}), and ordered Ramsey number for hypergraphs (see \cite{BJV19, CFLS17}).

Cox and Stolee \cite[Section 3]{CS18}
studied the case of $k$-coloring of $t$-chains in a Boolean lattice
with $k \geq 3$ and $t \geq 2$,
mainly for particular posets with $t = 2$. Katona et al. \cite{KMOW25} gave several lower and upper bounds on
the weak and strong poset Ramsey number for $t$-chains.

Cox and Stolee \cite[Section 6]{CS18} also posed the following problem.
\begin{problem}\label{prob1}
Does $\mathrm{R}_{k,t}^{\sharp}(\mathcal{B}\,|\,Q)$ exist for a poset $Q$?
\end{problem}

For $t=1$, this question has received attention; however, for $t\geq 2$, they mentioned that it is not even obvious that such an $N$ exists.

Katona et al. \cite{KMOW25} obtained the following lower bound for $\operatorname{R}_{k,t}^{\sharp}(\mathcal{B}\,|\,B_{m_1},B_{m_2},\ldots,B_{m_k})$.
\begin{theorem}{\upshape \cite{KMOW25}}\label{th-lower-Boolean}
Let $k,t, m_1, m_2,\ldots,m_k$ be integers with $k\geq 3$,
and $1 \leq t-1 \leq m_1\leq m_2 \leq \cdots \leq m_k$. Then
\begin{equation*}
\begin{split}
&\operatorname{R}_{k,t}^{\sharp}(\mathcal{B}\,|\,B_{m_1},B_{m_2},\ldots,B_{m_k})
>
\min\left\{
m_1 + \frac{h_{m_1}(t) +(h_{m_1}(t)-1)\log_2(k-1) -1}{4{m_{1}\choose\left\lfloor m_{1}/2\right\rfloor}}, \ 
m_k + \frac{h_{m_k}(t) -1}{2{m_{k}\choose\left\lfloor m_{k}/2\right\rfloor}} \right\}.
\end{split}
\end{equation*}
\end{theorem}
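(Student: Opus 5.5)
The plan is a first-moment (union bound) argument with a \emph{non-uniform} random coloring of the $t$-chains of $B_N$. The shape of the bound suggests this split: color $k$ is used with probability $1/2$, and the other $k-1$ colors share the remaining probability $1/2$ equally. Fix an integer $N$ strictly smaller than the minimum on the right-hand side. I will show that some $k$-coloring of the $t$-chains of $B_N$ has no induced copy of $B_{m_i}$ that is monochromatic in color $i$, for every $i$. This gives $\operatorname{R}_{k,t}^{\sharp}(\mathcal{B}\,|\,B_{m_1},\ldots,B_{m_k})>N$. Since $N$ was any integer below the bound, the strict inequality in the statement follows. If the minimum is at most $m_k$, the claim is trivial or handled directly, so assume $N\ge m_k$.

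Color every $t$-chain of $B_N$ independently: color $k$ with probability $1/2$, and each color $i\in\{1,\ldots,k-1\}$ with probability $\frac{1}{2(k-1)}$. Two kinds of bad event can occur.

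\textbf{Bad events of the second kind.} Some induced copy of $B_{m_k}$ is monochromatic in color $k$. Such a copy has exactly $h_{m_k}(t)$ $t$-chains, so this happens with probability $2^{-h_{m_k}(t)}$. By Theorem~\ref{number_b_n}, the expected number of such copies is at most
\[
g(m_k,N)\,2^{-h_{m_k}(t)}<2^{2\binom{m_k}{\lfloor m_k/2\rfloor}(N-m_k)-h_{m_k}(t)}.
\]
This is at most $1/2$ as soon as $N<m_k+\frac{h_{m_k}(t)-1}{2\binom{m_k}{\lfloor m_k/2\rfloor}}$.

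\textbf{Bad events of the first kind.} For some $i<k$ there is an induced copy of $B_{m_i}$ monochromatic in color $i$. Here I use monotonicity. Because $m_1\le m_i$, an induced copy of $B_{m_i}$ contains an induced copy of $B_{m_1}$, obtained by fixing $m_i-m_1$ coordinates. That subcopy is then also monochromatic in color $i$. So it suffices to rule out, for each $i<k$, induced copies of $B_{m_1}$ monochromatic in color $i$. The expected number of these is at most
\[
(k-1)\,g(m_1,N)\,\bigl(2(k-1)\bigr)^{-h_{m_1}(t)}<2^{2\binom{m_1}{\lfloor m_1/2\rfloor}(N-m_1)+\log_2(k-1)-h_{m_1}(t)-h_{m_1}(t)\log_2(k-1)}.
\]
This is below $1/2$ whenever $2\binom{m_1}{\lfloor m_1/2\rfloor}(N-m_1)<h_{m_1}(t)+(h_{m_1}(t)-1)\log_2(k-1)-1$. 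The hypothesis on $N$ (with the denominator $4\binom{m_1}{\lfloor m_1/2\rfloor}$) guarantees this with room to spare. The hypothesis $t-1\le m_1$ ensures $h_{m_1}(t)$ is meaningful (when $t=m_1+1$ we have $h_{m_1}(t)=0$, and the bound degenerates consistently).

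Adding the two expectations gives a value below $1$. Hence some coloring has no bad event, which completes the argument.

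The main point needing care is the reduction from $B_{m_i}$ to $B_{m_1}$ for the colors $i<k$. The subcube must be an \emph{induced} copy, and every $t$-chain of the subcopy must be a $t$-chain of the larger copy. Both hold because a subcube of an induced Boolean sublattice is again an induced Boolean sublattice, and chains are preserved under inclusion of families. The remaining arithmetic, including absorbing the $N!/(N-m)!$ factor, is already handled by the crude bound of Theorem~\ref{number_b_n}.
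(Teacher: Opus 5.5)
The paper gives no proof of this statement; it is quoted from \cite{KMOW25}, so there is no in-paper argument to compare with. Your route is the natural one: a biased random coloring (color $k$ with probability $1/2$, every other color with probability $1/(2(k-1))$), reduction of the colors $i<k$ to induced subcubes $B_{m_1}$, and a union bound via Theorem~\ref{number_b_n}. It reproduces the second term of the minimum exactly. Your first-kind estimate even gives the denominator $2\binom{m_1}{\floor{m_1/2}}$, so the stated $4\binom{m_1}{\floor{m_1/2}}$ leaves slack. The subcube reduction is correct. Two side remarks are wrong but harmless. First, $h_{m_1}(m_1+1)=m_1!\ge1$ (the maximal chains of $B_{m_1}$), not $0$. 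Second, you need not assume $N\ge m_k$, since for $N<m_k$ the second expectation is simply $0$.

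The genuine gap is the last step. A good coloring of $B_N$ for every integer $N$ \emph{strictly} below the bound $x$ only gives $\BRs_{k,t}(\mathcal B\mid B_{m_1},\dots,B_{m_k})\ge\ceil{x}$. For integer $x$ this is $\ge x$, not $>x$, and integer values do occur: e.g.\ $k=3$, $t=2$, $m_1=m_2=m_3=2$ gives $x=3$. You must also handle $N=\floor{x}$, and this is not cosmetic. For $t=2$ and $m_1=\cdots=m_k=1$ both terms of the minimum equal $1$, yet $\BRs_{k,2}(\mathcal B\mid B_1,\dots,B_1)=1$: the unique $2$-chain of $B_1$ is itself an induced $B_1$, monochromatic in whatever color $i$ it gets. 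In your computation both expectations are then exactly $1/2$, since $g(1,1)=1$; note that the strict inequality in Theorem~\ref{number_b_n} fails at $N=m$. So the statement as quoted is false in this degenerate case, and you should say that it must be excluded.

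Otherwise the repair is short. At $N=\floor{x}$ the second expectation is still at most $1/2$. The factor-$2$ slack in the first term bounds the first expectation by $2^{-1-X/2}$, where $X=h_{m_1}(t)+(h_{m_1}(t)-1)\log_2(k-1)-1$. This is $<1/2$ unless $h_{m_1}(t)=1$, i.e.\ $m_1=1$, $t=2$. In that case $x\le1$, so $N\le1$, and the second expectation vanishes unless also $m_k=1$, which is precisely the excluded case.
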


The following corollary is immediate.
\begin{corollary}\label{cor-lower-Boolean-uniform}
Let $k,t, p$ be integers with $k\geq 3$ and $1 \leq t-1 \leq p$. Then
\begin{equation*}
\operatorname{R}_{k,t}^{\sharp}(B_p)
> \min\left\{
p + \frac{h_{p}(t) +(h_{p}(t)-1)\log_2(k-1) -1}{4\binom{p}{\left\lfloor p/2\right\rfloor}}, \ 
p + \frac{h_{p}(t) -1}{2\binom{p}{\left\lfloor p/2\right\rfloor}} \right\},
\end{equation*}
where $h_p(t)$ denotes the number of $t$-chains in $B_p$ (defined in Proposition \ref{th-La2}).
\end{corollary}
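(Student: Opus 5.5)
The corollary is the case $m_1=m_2=\cdots=m_k=p$ of Theorem~\ref{th-lower-Boolean}, so the plan is a direct specialization and no new combinatorial argument is needed. We take $P_1=\cdots=P_k=B_p$ and use the convention stated after Definition~\ref{Defi-Wposet}: $\operatorname{R}_{k,t}^{\sharp}(B_p)$ abbreviates $\operatorname{R}_{k,t}^{\sharp}(\mathcal{B}\,|\,B_p,\ldots,B_p)$ with $k$ copies of $B_p$.

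First we check the hypotheses of Theorem~\ref{th-lower-Boolean}. We need $k\ge 3$, which is assumed. We also need $1\le t-1\le m_1\le\cdots\le m_k$. With all $m_i=p$ the monotonicity chain holds with equality, and $1\le t-1\le m_1$ becomes exactly the assumption $1\le t-1\le p$.

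Next we substitute $m_1=m_k=p$ into both terms of the minimum. The first term becomes
\[
p+\frac{h_p(t)+(h_p(t)-1)\log_2(k-1)-1}{4\binom{p}{\lfloor p/2\rfloor}},
\]
and the second becomes
\[
p+\frac{h_p(t)-1}{2\binom{p}{\lfloor p/2\rfloor}}.
\]
These are precisely the two expressions in the corollary. The strict inequality carries over unchanged.

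There is no real obstacle here. The only point needing care is that the abbreviation $\operatorname{R}_{k,t}^{\sharp}(B_p)$ refers to the strong Boolean Ramsey number with host family $\mathcal{B}$, so that Theorem~\ref{th-lower-Boolean} applies verbatim. The identification of $h_p(t)$ with the quantity in Proposition~\ref{th-La2} is notational only.
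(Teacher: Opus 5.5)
Your proposal is correct and follows the paper's route exactly: the paper calls the corollary immediate from Theorem~\ref{th-lower-Boolean}, obtained by setting $m_1=\cdots=m_k=p$. Under that substitution the hypothesis $1\le t-1\le m_1\le\cdots\le m_k$ becomes $1\le t-1\le p$, and both terms of the minimum reduce to the stated expressions.
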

\subsection{Forbidden subposet problems}
Katona and Tarján \cite{KT83} introduced the problem of determining
$\operatorname{La}(n, P)$, which is the maximum size of a family $\mathcal{F} \subseteq 2^{[n]}$ with no copy of $P$ (weak embedding).
Let
$\operatorname{La}^{\sharp}(n, P)$ be the maximum size of a family $\mathcal{F} \subseteq 2^{[n]}$ with no induced copy of $P$ (strong embedding).
For more details on this topic, we refer to a survey paper \cite{GL15} and the book \cite{GP19}.

We present a natural generalization of these classical extremal poset problems, shifting the count from family sizes to the maximum number of $t$-chains in $P$-free families. This generalization is key to deriving lower bounds of Erd\H{o}s-Gy\'{a}rf\'{a}s function for Boolean lattices.

We shall use the following $t$-chain analogues.

\begin{definition}\label{def:La_t}
Let $P$ be a finite poset and let $\mathcal F\subseteq B_n$. We write $A_t(\mathcal F)$ for the set of all $t$-chains in $\mathcal F$. Define
\[
\LaT_t(n,P)=\max\{|A_t(\mathcal F)|:\mathcal F\subseteq B_n\text{ contains no weak copy of }P\}
\]
and
\[
\LaT_t^\sharp(n,P)=\max\{|A_t(\mathcal F)|:\mathcal F\subseteq B_n\text{ contains no induced copy of }P\}.
\]
\end{definition}

\begin{definition}\label{def:L_t}
Let $\mathcal C_t\subseteq \mathcal T_n(t)$. We say that $\mathcal C_t$ is \emph{weakly $(t,P)$-free} if there is no weak copy $S\subseteq B_n$ of $P$ with $A_t(S)\subseteq \mathcal C_t$, and \emph{strongly $(t,P)$-free} if there is no induced copy $S\subseteq B_n$ of $P$ with $A_t(S)\subseteq \mathcal C_t$. Define
\[
\LT_t(n,P)=\max\{|\mathcal C_t|:\mathcal C_t\subseteq \mathcal T_n(t)\text{ is weakly $(t,P)$-free}\}
\]
and
\[
\LT_t^\sharp(n,P)=\max\{|\mathcal C_t|:\mathcal C_t\subseteq \mathcal T_n(t)\text{ is strongly $(t,P)$-free}\}.
\]
\end{definition}

\begin{proposition}\label{prop:weak-strong-monotone}
For every finite poset $P$ and every integer $t\ge1$,
\[
\LaT_t^\sharp(n,P)\ge \LaT_t(n,P)
\qquad\text{and}\qquad
\LT_t^\sharp(n,P)\ge \LT_t(n,P).
\]
\end{proposition}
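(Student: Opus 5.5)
The plan is to show that every family or chain-set feasible for the weak problem is also feasible for the strong problem. Both inequalities then follow by taking maxima over a larger class. The basic fact is that every strong embedding is a weak embedding: if $f(x)\le f(y)$ holds if and only if $x\le y$, then in particular $x\le y$ implies $f(x)\le f(y)$, and $f$ is injective by definition. Hence every induced copy of $P$ in $B_n$ is in particular a (weak) copy of $P$.

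For the first inequality, let $\mathcal F\subseteq B_n$ attain $\LaT_t(n,P)$, so that $\mathcal F$ contains no weak copy of $P$. If $\mathcal F$ contained an induced copy of $P$, that copy would be a weak copy by the observation above, which is a contradiction. So $\mathcal F$ contains no induced copy of $P$ and is admissible in the definition of $\LaT_t^\sharp(n,P)$. Therefore
\[
\LaT_t^\sharp(n,P)\ge |A_t(\mathcal F)|=\LaT_t(n,P).
\]

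For the second inequality, take $\mathcal C_t\subseteq\mathcal T_n(t)$ that is weakly $(t,P)$-free and has size $\LT_t(n,P)$. Suppose there were an induced copy $S\subseteq B_n$ of $P$ with $A_t(S)\subseteq\mathcal C_t$. Then $S$ is also a weak copy of $P$ with $A_t(S)\subseteq\mathcal C_t$, which contradicts weak $(t,P)$-freeness. So $\mathcal C_t$ is strongly $(t,P)$-free, and $\LT_t^\sharp(n,P)\ge|\mathcal C_t|=\LT_t(n,P)$.

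There is no real obstacle here. The only points to check are that the maxima are over finite nonempty collections, which holds because the empty family and the empty chain-set are always admissible, and that "copy" in Definitions \ref{def:La_t} and \ref{def:L_t} is read as the image $f(P)$, so the inclusion of induced copies into weak copies transfers directly.
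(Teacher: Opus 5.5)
Your proof is correct and is the same argument the paper gives: every strong embedding is a weak embedding, so every weakly admissible family or chain-set is strongly admissible, and the maxima compare accordingly. One small slip in your closing remark is that the empty chain-set is not always admissible: if $h(P)<t$ and $P$ has an induced copy $S$ in $B_n$, then $A_t(S)=\emptyset\subseteq\mathcal C_t$ for every $\mathcal C_t$, so no chain-set is $(t,P)$-free in either sense. This degenerate case, where both maxima are over empty collections, does not affect your inclusion argument.
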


\begin{proof}
Every strong embedding is, in particular, a weak embedding. Hence every weakly $P$-free family is also strongly $P$-free, and every weakly $(t,P)$-free set of $t$-chains is also strongly $(t,P)$-free.
\end{proof}

\begin{proposition}\label{prop:La-vs-L}
Let $P$ be a finite poset and let $t\ge1$. Assume that every element of $P$ belongs to at least one $t$-chain of $P$. Then
\[
\LaT_t(n,P)\le \LT_t(n,P)
\qquad\text{and}\qquad
\LaT_t^\sharp(n,P)\le \LT_t^\sharp(n,P).
\]
\end{proposition}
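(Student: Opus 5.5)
The plan is to take an extremal family and pass to its set of $t$-chains. Let $\mathcal F\subseteq B_n$ contain no weak (respectively, induced) copy of $P$ and satisfy $|A_t(\mathcal F)|=\LaT_t(n,P)$ (respectively, $\LaT_t^\sharp(n,P)$). Put $\mathcal C_t:=A_t(\mathcal F)\subseteq\mathcal T_n(t)$. Then $|\mathcal C_t|=|A_t(\mathcal F)|$, so both inequalities follow once we show that $\mathcal C_t$ is weakly (respectively, strongly) $(t,P)$-free, by Definition~\ref{def:L_t}.

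To prove freeness, argue by contradiction. Suppose $S\subseteq B_n$ is a weak (respectively, induced) copy of $P$ with $A_t(S)\subseteq\mathcal C_t=A_t(\mathcal F)$. The key step is to show $S\subseteq\mathcal F$. Write $S=f(P)$ for the corresponding embedding $f$ and take any $x\in S$, say $x=f(u)$. By the hypothesis on $P$, $u$ lies in some $t$-chain $u_1<\cdots<u_t$ of $P$. Since $f$ is injective and order-preserving (weak) or order-reflecting as well (strong), its image $f(u_1)\subsetneq\cdots\subsetneq f(u_t)$ consists of $t$ distinct pairwise comparable sets. So it is a $t$-chain of $S$ containing $x$, and hence it lies in $A_t(S)\subseteq A_t(\mathcal F)$. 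Every element of a $t$-chain of $\mathcal F$ belongs to $\mathcal F$, so $x\in\mathcal F$. Therefore $S\subseteq\mathcal F$, and $f$ gives a weak (respectively, induced) copy of $P$ inside $\mathcal F$, contradicting the choice of $\mathcal F$.

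The only delicate point is what "$t$-chain of $S$" means when $S$ is only a weak copy: comparability in $S$ is taken in $B_n$, so $S$ may have extra comparabilities. This only enlarges $A_t(S)$, and the argument above needs only that the images of $P$'s $t$-chains are $t$-chains in $B_n$. That holds because weak embeddings are injective and preserve order. In the strong case the same reasoning applies verbatim.

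The hypothesis that every element of $P$ lies in a $t$-chain is exactly what rules out elements of $S$ that are invisible to $A_t(S)$; I expect that to be the only real obstacle. Without it, $S$ could have an isolated element outside $\mathcal F$, and the inclusion $S\subseteq\mathcal F$ would fail.
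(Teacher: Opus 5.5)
Your proof is correct and follows essentially the same route as the paper: you set $\mathcal C_t=A_t(\mathcal F)$ for a $P$-free family and show, by contradiction, that a copy $S$ with $A_t(S)\subseteq\mathcal C_t$ would lie entirely in $\mathcal F$, since every element of $S$ sits on a $t$-chain. Your explicit check that a weak embedding maps $t$-chains of $P$ to $t$-chains of $B_n$ fills in the step the paper states as ``by assumption.''
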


\begin{proof}
We prove the weak statement; the strong one is identical.

Let $\mathcal F\subseteq B_n$ contain no weak copy of $P$, and put $\mathcal C_t=A_t(\mathcal F)\subseteq \mathcal T_n(t)$. Suppose, for contradiction, that $\mathcal C_t$ is not weakly $(t,P)$-free. Then there exists a weak copy $S\subseteq B_n$ of $P$ such that $A_t(S)\subseteq \mathcal C_t$. By assumption, every element of $S$ belongs to some $t$-chain of $S$, and every such $t$-chain lies in $\mathcal C_t=A_t(\mathcal F)$. Therefore every element of $S$ belongs to $\mathcal F$, contradicting that $\mathcal F$ is weakly $P$-free. Hence $\mathcal C_t$ is weakly $(t,P)$-free, and so $|A_t(\mathcal F)|\le \LT_t(n,P)$.

Taking the maximum over all admissible $\mathcal F$ yields $\LaT_t(n,P)\le \LT_t(n,P)$.
\end{proof}

\subsection{Relations on the Erd\H{o}s-Gy\'{a}rf\'{a}s function}
\label{sec_2-3}

Based on the definitions of weak/strong embeddings and the corresponding coloring functions $f_t(n, p, q)$ and $f_t^{\sharp}(n, p, q)$, we first establish a fundamental relation between these two minimum color-count functions as follows.
\begin{proposition}\label{prop-ft-ftsharp-ineq}
For all positive integers $p, q, t$ with $1 \leq t < p+1$ and $1 \leq q \leq h_p(t)$,
$f_t(n, p, q) \geq f_t^{\sharp}(n, p, q)$.
\end{proposition}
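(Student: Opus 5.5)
The argument is a direct comparison of feasible colorings: every $(p,q,t)$-coloring of $B_n$ is automatically a strong $(p,q,t)$-coloring. The minimum over the larger class of strong colorings can then be no larger than the minimum over the smaller class of ordinary colorings.

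First, I fix an optimal $(p,q,t)$-coloring $c$ of the $t$-chains of $B_n$ using exactly $f_t(n,p,q)$ colors; such a coloring exists whenever $f_t(n,p,q)$ is defined, since the set of admissible colorings is nonempty and the number of colors is a positive integer.

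Next, I check that $c$ is also a strong $(p,q,t)$-coloring. Let $S\subseteq B_n$ be an induced copy of $B_p$, that is, $S=f(B_p)$ for a strong embedding $f$. Exactly as in the proof of Proposition~\ref{prop:weak-strong-monotone}, every strong embedding is in particular a weak embedding, so $S$ is also a copy of $B_p$ in the sense of Definition~\ref{def:pqtcolor}. Hence, by the defining property of $c$, the $t$-chains of $S$ receive at least $q$ distinct colors. One small point needs care here: the set of $t$-chains "of the copy" should mean the same thing in both definitions. I take it to be the $t$-chains of the subposet $S$ of $B_n$, or equivalently the images of the $t$-chains of $B_p$. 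For an induced copy, these two readings coincide because $f$ preserves and reflects order. So no ambiguity arises, and the hypothesis on weak copies applies verbatim.

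Finally, since $c$ is a strong $(p,q,t)$-coloring with $f_t(n,p,q)$ colors, minimality gives $f_t^{\sharp}(n,p,q)\le f_t(n,p,q)$. I do not expect any genuine obstacle. The only step requiring attention is the matching of "the $t$-chains of a copy" noted above, and it is settled by the observation that induced copies are copies with identical chain sets.
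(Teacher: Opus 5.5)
Your proposal is correct and follows the paper's own argument: every strong embedding is a weak embedding, so every $(p,q,t)$-coloring is also a strong $(p,q,t)$-coloring, and minimizing over the larger class of strong colorings gives $f_t^{\sharp}(n,p,q)\le f_t(n,p,q)$. Your remark that, for an induced copy, the $t$-chains of the subposet coincide with the images of the $t$-chains of $B_p$ is a harmless clarification that the paper leaves implicit.
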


\begin{proof}
Let $\mathcal{C}(B_p,B_n)$ (respectively, $\mathcal{C}^{\sharp}(B_p,B_n)$) be the set of all copies (respectively, induced copies) of $B_p$ in $B_n$.
Since every strong embedding is, in particular, a weak embedding, we have
$\mathcal{C}^{\sharp}(B_p,B_n)\subseteq \mathcal{C}(B_p,B_n)$.
For a $t$-chain coloring $\chi$ of $B_n$, say that $\chi$ is \emph{weak-valid} if every $C\in \mathcal{C}(B_p,B_n)$ contains at least $q$ distinct colors on its $t$-chains, and \emph{strong-valid} if the same holds for every $C\in \mathcal{C}^{\sharp}(B_p,B_n)$.
Let $\mathcal{X}_{\mathrm{weak}}$ and $\mathcal{X}_{\mathrm{strong}}$ denote the sets of weak-valid and strong-valid colorings, respectively.
By the inclusion above, any weak-valid coloring is automatically strong-valid, hence
$\mathcal{X}_{\mathrm{weak}}\subseteq \mathcal{X}_{\mathrm{strong}}$.
By definitions of $f_t(n,p,q)$ and $f_t^{\sharp}(n,p,q)$, we have $f_t(n,p,q)\ge f_t^{\sharp}(n,p,q)$.
\end{proof}

We next give monotonicity in $p$.
\begin{proposition}\label{prop-ft-p-ineq}
Let $t, p, q, n$ be positive integers satisfying $1 \leq t < p+1$, $1 \leq q \leq h_{p-1}(t)$, and $n \geq p$. Then,
\[ f_t(n, p, q) \leq f_t(n, p-1, q) \quad \text{and} \quad f_t^\sharp(n, p, q) \leq f_t^\sharp(n, p-1, q). \]
\end{proposition}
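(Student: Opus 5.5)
The plan is to show that any valid coloring for $p-1$ is automatically valid for $p$, by restricting to a sub-copy.

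Fix an optimal $(p-1,q,t)$-coloring $\chi$ of the $t$-chains of $B_n$ (respectively, an optimal strong one). We claim $\chi$ is also a $(p,q,t)$-coloring (respectively, a strong one). This gives both inequalities at once: $f_t(n,p,q)\le f_t(n,p-1,q)$ and $f_t^\sharp(n,p,q)\le f_t^\sharp(n,p-1,q)$. The hypothesis $q\le h_{p-1}(t)$ guarantees that the right-hand sides are defined, and since $h_{p-1}(t)\le h_p(t)$ the left-hand sides are defined as well.

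To verify the claim, let $S\subseteq B_n$ be a copy of $B_p$, i.e.\ $S=\varphi(B_p)$ for a weak (respectively, strong) embedding $\varphi$. Consider the subposet $B_{p-1}=2^{[p-1]}\subseteq 2^{[p]}=B_p$. The inclusion map is a strong embedding. Hence $\psi=\varphi|_{2^{[p-1]}}$ is a weak embedding of $B_{p-1}$ into $B_n$, and it is strong whenever $\varphi$ is strong. This is because a composition of strong embeddings is strong: $\psi(x)\le\psi(y)$ if and only if $x\le y$ in $B_p$, if and only if $x\le y$ in $B_{p-1}$.

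So $S'=\psi(B_{p-1})\subseteq S$ is a copy (respectively, an induced copy) of $B_{p-1}$. Every $t$-chain of $S'$ is a $t$-chain of $S$, since $S'\subseteq S$ as subsets of $B_n$ and chains are defined through the order of $B_n$. Therefore the colors on $A_t(S)$ include the colors on $A_t(S')$, and the latter number at least $q$ by validity of $\chi$. Thus $S$ receives at least $q$ colors.

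Note that $t\le p-1$ is needed for $A_t(S')$ to be nonempty. This follows from $q\le h_{p-1}(t)$ together with $q\ge1$, because $h_{p-1}(t)=0$ when $t>p-1$.

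The argument is essentially routine. The only point needing care is the observation that a restriction of a strong embedding remains strong, which holds because $B_{p-1}$ sits inside $B_p$ as an induced subposet. The required implication (an induced copy of $B_p$ contains an induced copy of $B_{p-1}$) follows directly from that fact.
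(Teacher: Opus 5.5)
Your proof is correct and is essentially the paper's argument. You compose the (weak or strong) embedding of $B_p$ with the natural inclusion $B_{p-1}\hookrightarrow B_p$, note that the resulting copy $S'$ of $B_{p-1}$ lies inside $S$, and conclude that the colors on $A_t(S)$ contain the at least $q$ colors on $A_t(S')$.

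One harmless slip is in your aside. Since $B_{p-1}$ has height $p$, $h_{p-1}(t)$ vanishes only for $t>p$ (e.g.\ $h_{p-1}(p)=(p-1)!$), so $q\le h_{p-1}(t)$ does not force $t\le p-1$. Nothing in your argument needs this, because validity of $\chi$ on copies of $B_{p-1}$ already supplies the $q$ colors.
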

\begin{proof}
Let $\alpha:B_{p-1}\rightarrow B_p$ be the natural inclusion.
Fix a $(p\!-\!1,q,t)$-valid coloring $\chi$ of the $t$-chains of $B_n$.
For any weak embedding $f:B_p\to B_n$, the composition $f\circ\alpha:B_{p-1}\to B_n$ is a weak embedding, and
$C'=(f\circ\alpha)(B_{p-1})\subseteq C=f(B_p)$.
Thus the set of colors appearing on $t$-chains of $C$ contains that of $C'$, so
$|\chi(A_t(C))|\ge |\chi(A_t(C'))|\ge q$.
Hence $\chi$ is $(p,q,t)$-valid, and therefore $f_t(n,p,q)\le f_t(n,p-1,q)$.
The same argument applies $f_t^\sharp(n,p,q)\le f_t^\sharp(n,p-1,q)$.
\end{proof}

We state monotonicity in $q$.
\begin{proposition}\label{prop-ft-q-ineq}
Let $t, p, q, n$ be positive integers satisfying
$1 \leq t < p+1$, $2 \leq q \leq h_p(t)$, and
$n \geq p$. Then
\[
f_t^\sharp(n, p, q-1) \leq f_t^\sharp(n, p, q).
\]
\end{proposition}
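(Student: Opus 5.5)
The plan is to argue exactly as in Proposition~\ref{prop-ft-ftsharp-ineq}, by comparing the sets of admissible colorings. Write $\mathcal{C}^{\sharp}(B_p,B_n)$ for the set of induced copies of $B_p$ in $B_n$. Fix any strong $(p,q,t)$-coloring $\chi$ of the $t$-chains of $B_n$ that uses exactly $f_t^\sharp(n,p,q)$ colors. Such a coloring exists because $q\le h_p(t)$: giving every $t$-chain of $B_n$ its own color is always a strong $(p,q,t)$-coloring, so the minimum is taken over a nonempty set.

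The main step is to check that this same $\chi$ is a strong $(p,q-1,t)$-coloring. The assumptions $2\le q\le h_p(t)$ give $1\le q-1\le h_p(t)$, so the parameter triple $(p,q-1,t)$ is admissible in Definition~\ref{def:pqtcolor}. For every $C\in\mathcal{C}^{\sharp}(B_p,B_n)$ we have
\[
|\chi(A_t(C))|\ge q> q-1 .
\]
Hence every induced copy of $B_p$ sees at least $q-1$ colors on its $t$-chains, and $\chi$ is valid for the weaker requirement.

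Consequently the set of strong $(p,q,t)$-colorings is contained in the set of strong $(p,q-1,t)$-colorings. The minimum number of colors over a larger set can only be smaller or equal, which gives $f_t^\sharp(n,p,q-1)\le f_t^\sharp(n,p,q)$.

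There is no genuine obstacle. The only points needing care are that $q-1$ stays in the admissible range, which is why the hypothesis $q\ge2$ is imposed, and that the minimum defining $f_t^\sharp(n,p,q)$ is over a nonempty set. The condition $n\ge p$ ensures that induced copies of $B_p$ exist, although the inequality would hold vacuously even without them. The same argument works verbatim for the weak function $f_t$.
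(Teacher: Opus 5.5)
Your proposal is correct and matches the paper's proof: every strong $(p,q,t)$-coloring is also a strong $(p,q-1,t)$-coloring, so the minimum over the larger class of colorings is at most $f_t^\sharp(n,p,q)$. Your extra checks, that $q-1$ stays in the admissible range and that the minimum is over a nonempty set (witnessed by the all-distinct coloring), are details the paper leaves implicit.
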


\begin{proof}
Since any strong $(p,q,t)$-coloring is also a strong $(p,q-1,t)$-coloring, a coloring that is valid for $(p,q,t)$ is also valid for $(p,q-1,t)$, and hence
$f_t^\sharp(n,p,q-1) \le f_t^\sharp(n,p,q)$.
\end{proof}

We now establish a fundamental monotonicity property of \(f_t^\sharp(n,p,2)\), which is critical for our inductive argument on Ramsey number bounds.
\begin{proposition}[Monotonicity of \(f_t^\sharp\)]\label{prop-ftsharp-n-ineq}
Let \(1 \leq t \leq p\) and \(p \leq n_1 \leq n_2\). Then
\[
f_t^\sharp(n_1, p, 2) \leq f_t^\sharp(n_2, p, 2).
\]
\end{proposition}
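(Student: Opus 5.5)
The plan is to restrict a coloring of the larger lattice to a copy of the smaller one. Put $k=f_t^\sharp(n_2,p,2)$ and fix a strong $(p,2,t)$-coloring $\chi$ of the $t$-chains of $B_{n_2}$ that uses $k$ colors. Let $\iota:B_{n_1}\to B_{n_2}$ be the natural inclusion $S\mapsto S$, which identifies $B_{n_1}=2^{[n_1]}$ with the subfamily $2^{[n_1]}\subseteq 2^{[n_2]}$. It is a strong embedding, since $S\subseteq T$ holds in $B_{n_1}$ exactly when it holds in $B_{n_2}$. Consequently $\iota$ sends every $t$-chain of $B_{n_1}$ to a $t$-chain of $B_{n_2}$, injectively. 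We then define a coloring $\chi'$ of the $t$-chains of $B_{n_1}$ by $\chi'(C)=\chi(\iota(C))$. This coloring uses at most $k$ colors.

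The next step is to check that $\chi'$ is a strong $(p,2,t)$-coloring. Take any induced copy $D=f(B_p)$ of $B_p$ in $B_{n_1}$, where $f$ is a strong embedding. The composition $\iota\circ f:B_p\to B_{n_2}$ is again a strong embedding, because
\[
f(x)\subseteq f(y)\iff \iota(f(x))\subseteq\iota(f(y)).
\]
So $\iota(D)$ is an induced copy of $B_p$ in $B_{n_2}$. Moreover, the $t$-chains of $\iota(D)$ are exactly the images under $\iota$ of the $t$-chains of $D$. Therefore the set of colors $\chi'$ uses on $A_t(D)$ equals the set of colors $\chi$ uses on $A_t(\iota(D))$, which has at least $2$ elements. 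The hypothesis $n_1\ge p$ guarantees that induced copies of $B_p$ exist in $B_{n_1}$, so the condition is not vacuous, although vacuity would not hurt the argument either.

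This gives $f_t^\sharp(n_1,p,2)\le k=f_t^\sharp(n_2,p,2)$. The only point that needs care is that both quantities are well defined, that is, that some finite number of colors suffices. This is immediate: giving every $t$-chain its own color works, because $h_p(t)\ge 2$ whenever $1\le t\le p$ and $p\ge1$. For $p=t=1$ we have $h_1(1)=2$, and in general $B_p$ has at least two $t$-chains. I do not expect a real obstacle. The argument is the same restriction trick as in Proposition~\ref{prop-ft-p-ineq}, with the roles reversed: there we composed with an embedding of the pattern, and here we compose with an embedding of the host.
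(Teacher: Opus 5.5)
Your proposal is correct and follows the same argument as the paper: pull back an optimal strong $(p,2,t)$-coloring of $B_{n_2}$ along the canonical inclusion $B_{n_1}\hookrightarrow B_{n_2}$. Your check that induced copies of $B_p$ and their $t$-chains map to induced copies and their $t$-chains is the same step the paper uses. Your well-definedness remark (distinct colors on all $t$-chains work since $h_p(t)\ge 2$) is a small addition that the paper does not spell out here.
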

\begin{proof}
Let \(k = f_t^\sharp(n_2, p, 2)\). By Definition~\ref{def:ftnpq}, there exists a strong \((p,2,t)\)-coloring \(\chi\) of \(B_{n_2}\) with exactly \(k\) colors. Let \(\iota: B_{n_1} \hookrightarrow B_{n_2}\) denote the canonical inclusion (induced by \([n_1] \subseteq [n_2]\)), which maps induced copies of \(B_p\) in \(B_{n_1}\) to induced copies of \(B_p\) in \(B_{n_2}\), and \(t\)-chains in \(B_{n_1}\) to \(t\)-chains in \(B_{n_2}\).
The restriction \(\chi \circ \iota: \mathcal{T}_{n_1}(t) \to [k]\) is a strong \((p,2,t)\)-coloring of \(B_{n_1}\): every induced \(B_p\) in \(B_{n_1}\) maps to an induced \(B_p\) in \(B_{n_2}\), so its \(t\)-chains receive at least 2 distinct colors under \(\chi \circ \iota\) (by validity of \(\chi\)).
By minimality of \(f_t^\sharp(n_1, p, 2)\) (Definition~\ref{def:ftnpq}), the minimal number of colors for \(B_{n_1}\) cannot exceed \(k\). Thus \(f_t^\sharp(n_1, p, 2) \leq k = f_t^\sharp(n_2, p, 2)\).
\end{proof}

\section{Existence of strong Boolean Ramsey numbers}\label{sec6}

We begin by making precise the ordered structures that appear in the proof.

For posets $P$ and $Q$, we write $P\subseteq_{\ind} Q$ if $P$ is isomorphic to an
induced subposet of $Q$, that is, if there exists an injective map
$f:P\to Q$ such that for all $x,y\in P$, one has
$x\le_P y$ if and only if $f(x)\le_Q f(y)$.

Let $(P,\le_P)$ be a finite poset. A \emph{linear extension} of $(P,\le_P)$ is a
linear order $\prec_P$ on $P$ such that $x\le_P y$ implies $x\prec_P y$ for all
$x,y\in P$. A \emph{finite poset with a linear extension} is a triple
$\mathbf P=(P,\le_P,\prec_P)$, where $(P,\le_P)$ is a finite poset and
$\prec_P$ is a linear extension of $\le_P$.

Let $\mathbf A=(A,\le_A,\prec_A)$ and $\mathbf B=(B,\le_B,\prec_B)$ be finite
posets with linear extensions. A map $f:A\to B$ is called a \emph{copy} of
$\mathbf A$ in $\mathbf B$ if $f$ is injective and for all $x,y\in A$,
$x\le_A y$ if and only if $f(x)\le_B f(y)$, and
$x\prec_A y$ if and only if $f(x)\prec_B f(y)$.

\begin{theorem}[Ramsey theorem for finite posets with a linear extension {\upshape\cite[Theorem~6]{ArmanRodl17}}]\label{th:ordered-poset-ramsey}
Let $\mathbf A=(A,\le_A,\prec_A)$ and $\mathbf B=(B,\le_B,\prec_B)$ be finite
posets with linear extensions, and let $k\ge 2$. Then there exists a finite
poset with a linear extension $\mathbf C=(C,\le_C,\prec_C)$ such that for every
$k$-coloring of the set of copies of $\mathbf A$ in $\mathbf C$, there exists a
copy $\widetilde{\mathbf B}$ of $\mathbf B$ in $\mathbf C$ such that all copies
of $\mathbf A$ contained in $\widetilde{\mathbf B}$ receive the same color.
\end{theorem}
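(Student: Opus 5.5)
Since Theorem~\ref{th:ordered-poset-ramsey} is quoted from \cite{ArmanRodl17}, the plan is to reconstruct it by the Ne\v{s}et\v{r}il--R\"odl \emph{partite construction}. That construction proves the Ramsey property for any hereditary class of finite ordered structures that has the strong amalgamation property, provided a partite lemma holds. Write $a=|A|$ and $b=|B|$.

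\textbf{Step 1 (amalgamation).} First I would check that finite posets with a linear extension admit strong amalgamation. Let $\mathbf X$ and $\mathbf Y$ share a common substructure $\mathbf Z$.
\begin{itemize}
\item Take the disjoint union of $X$ and $Y$ glued along $Z$, and let the partial order be the transitive closure of $\le_X\cup\le_Y$.
\item Choose a linear order on $X\cup Y$ that extends both $\prec_X$ and $\prec_Y$; such an order exists because the two orders agree on $Z$.
\end{itemize}
The key verification is that the closure adds no new relation inside $X$ (or inside $Y$), so both are still induced copies. Any new relation $x\le x'$ with $x,x'\in X$ would come from a zigzag $x\le_X z\le_Y y\le_Y z'\le_X x'$ with $z,z'\in Z$. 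Then $z\le_Y z'$ gives $z\le_Z z'$, hence $z\le_X z'$, so $x\le_X x'$ already held. The same zigzag argument, applied to the union of $\prec_X$ and $\prec_Y$, shows that this union together with the new order relations is acyclic. Hence a common linear extension exists.

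\textbf{Step 2 (partite lemma).} Next I would prove an $a$-partite version of the statement: for an $a$-partite $\mathbf B$ (parts indexed by the elements of $A$, each transversal copy of $\mathbf A$ respecting parts), there is an $a$-partite $\mathbf C$ such that every $k$-coloring of transversal copies of $\mathbf A$ has a monochromatic transversal copy of $\mathbf B$. I would build $\mathbf C$ as a Hales--Jewett power. The elements of part $i$ are words over the copies of $\mathbf A$ in $\mathbf B$, and combinatorial lines correspond to copies of $\mathbf B$. The order and the linear extension are defined coordinatewise and lexicographically. The Hales--Jewett theorem then yields the monochromatic line, i.e.\ the monochromatic copy of $\mathbf B$.

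\textbf{Step 3 (partite construction).} Take an ordered $a$-uniform hypergraph Ramsey witness on $N$ vertices; this is available by the classical Ramsey theorem. Form a picture $\mathbf P_0$ by placing a copy of $\mathbf B$ on each $b$-set, compatible with the vertex order. Enumerate the $a$-subsets $e_1,\dots,e_M$ of $[N]$. At stage $j$, apply Step~2 to the part of $\mathbf P_{j-1}$ lying over $e_j$, and amalgamate copies of $\mathbf P_{j-1}$ along each resulting copy using Step~1 repeatedly; this gives $\mathbf P_j$. Let $\mathbf C=\mathbf P_M$. The standard backward induction then applies: in $\mathbf P_M$ we find a copy of $\mathbf P_0$ in which the color of a copy of $\mathbf A$ depends only on its projection $e\in\binom{[N]}{a}$. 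The hypergraph Ramsey property then gives a $b$-set whose $\mathbf B$ is monochromatic.

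\textbf{Main obstacle.} The main obstacle is keeping every picture $\mathbf P_j$ a genuine poset with a linear extension, in which the distinguished copies of $\mathbf B$ remain \emph{induced}. Because transitive closure can create unwanted comparabilities when many copies are glued, I would handle this by performing the amalgamations one at a time, each along a single copy of the previous picture. Step~1 guarantees that each individual gluing preserves inducedness. I would also check, as the paper needs in the sequel, that a copy in this sense is exactly a strong embedding that is also order-preserving for $\prec$.
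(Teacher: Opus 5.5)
\textbf{There is a genuine gap: the partite lemma of Step~2, with the coordinatewise order, does not apply to the partite pieces that Step~3 produces.}

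Your Step~1 is correct. Gluing copies of $\mathbf P_{j-1}$ onto a poset $R$ along induced copies of the partite piece $Q_j$ also preserves inducedness, since a path that leaves a copy must re-enter through $R$. The problem lies in Step~2 as used in Step~3. Read in the standard way, part $i$ of $\mathbf C$ consists of words over the $i$-th part of $Q$. A combinatorial line $L$ gives $\varphi_L(v)(c)=A_{L_c}(i)$ at fixed coordinates $c$ and $\varphi_L(v)(c)=v$ at variable ones, where $A_1,\dots,A_m$ are the transversal copies of $\mathbf A$ in $Q$. Under the coordinatewise order, $\varphi_L(x)<\varphi_L(y)$ for $x$ in part $i$ and $y$ in part $i'$ forces $A_{L_c}(i)<A_{L_c}(i')$. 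That is, the $i$-th and $i'$-th elements of $\mathbf A$ must be comparable. So your lemma only handles $Q$ in which every comparability projects to a comparability of $\mathbf A$.

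Your $Q_j$ are not of this kind. With the complete $a$-uniform hypergraph on $[N]$ as base, $Q_j$ contains the traces on $e_j$ of all copies of $\mathbf B$ on $b$-sets containing $e_j$. These are arbitrary $a$-element subposets of $\mathbf B$, which share with $\mathbf A$ only the order type of $\prec$. For example, let $\mathbf A$ be the $2$-antichain and $\mathbf B=\{x\prec y\prec z\}$ with $x<z$ its only comparability. Over $e=\{1,3\}$, the copy of $\mathbf B$ on $\{1,2,3\}$ contributes the pair $x<z$. Every line with a fixed coordinate maps this pair to an incomparable one. Hales--Jewett gives you no control over which line you get, so you do not obtain a copy of $Q_j$, and the induction fails at the first stage.

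The obvious patches do not transfer from the free (ordered-graph) case. Using an existential rather than universal product for pairs of parts that are incomparable in $\mathbf A$ makes the line maps embeddings, but it destroys transitivity. Restoring transitivity by closure can then add comparabilities inside line copies.

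Taking as base an ordered-graph Ramsey witness for the comparability graphs (with $\prec$) does make $Q_1$ homomorphic to $\mathbf A$. However, the transitive closure forced by each gluing creates comparabilities between different copies of $\mathbf P_{j-1}$, routed through $R$, whose projections need not be adjacent. Later pieces $Q_{j'}$ can then violate the hypothesis again.

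Your ``main obstacle'' paragraph worries about inducedness in the gluing, which Step~1 already handles. The real obstruction is that the partite pieces carry comparabilities absent from $\mathbf A$, and the product partite lemma cannot cope with them. Controlling this interaction between transitivity and the partite lemma is exactly the extra work the poset case needs beyond the free-class partite construction.

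The paper gives no proof of this statement; it quotes it from Arman and R\"odl. You would need either such a citation or a partite lemma genuinely adapted to posets.
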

To transfer this statement to the Boolean lattice setting, we next record that every finite poset
admits a strong embedding into a Boolean lattice.
\begin{proposition}\label{prop:strong-embed-boolean}
Every finite poset $Q$ strongly embeds into a Boolean lattice. In fact,
$Q\subseteq_{\ind} B_{|Q|}$.
\end{proposition}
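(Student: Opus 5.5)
We use the standard down-set (principal ideal) representation. Index the ground set of $B_{|Q|}$ by the elements of $Q$ itself, i.e.\ identify $B_{|Q|}$ with the power set $2^{Q}$ ordered by inclusion. Define
\[
\phi:Q\to 2^{Q},\qquad \phi(x)=\{y\in Q:\ y\le_Q x\},
\]
the principal down-set of $x$. The plan is to show that $\phi$ is injective and that $x\le_Q y$ holds if and only if $\phi(x)\subseteq\phi(y)$. This is exactly the definition of a strong embedding given in the preliminaries, and it yields $Q\subseteq_{\ind} B_{|Q|}$.

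The verification proceeds in three steps.

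\emph{Forward implication.} If $x\le_Q y$ and $z\in\phi(x)$, then $z\le_Q x\le_Q y$. By transitivity $z\in\phi(y)$, so $\phi(x)\subseteq\phi(y)$.

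\emph{Reverse implication.} Suppose $\phi(x)\subseteq\phi(y)$. By reflexivity $x\in\phi(x)$, hence $x\in\phi(y)$, which means $x\le_Q y$.

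\emph{Injectivity.} If $\phi(x)=\phi(y)$, the reverse implication gives both $x\le_Q y$ and $y\le_Q x$. Antisymmetry then forces $x=y$.

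No step is a real obstacle; the argument uses only the three poset axioms, one per step. The only point needing a remark is the identification of $2^{Q}$ with $B_{|Q|}=2^{[|Q|]}$. For this, fix any bijection $Q\to[|Q|]$; it induces an order isomorphism $2^{Q}\cong 2^{[|Q|]}$, and composing $\phi$ with this isomorphism gives the desired strong embedding into $B_{|Q|}$.

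Finally, if a larger host is needed later (e.g.\ $B_n$ with $n\ge|Q|$), compose further with the canonical inclusion $B_{|Q|}\hookrightarrow B_n$ used in Proposition~\ref{prop-ftsharp-n-ineq}. That inclusion is itself a strong embedding, so the composite is again a strong embedding of $Q$ into $B_n$.
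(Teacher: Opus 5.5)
Your proof is correct and matches the paper's argument: both use the principal down-set map $\phi(x)=\{y\in Q: y\le_Q x\}$, with transitivity giving the forward implication and reflexivity giving the reverse. Your injectivity check via antisymmetry and your identification of $2^{Q}$ with $B_{|Q|}$ through a bijection $Q\to[|Q|]$ spell out details the paper leaves implicit.
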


\begin{proof}
Let the ground set of $Q$ be $V(Q)$. For each $x\in Q$, define
$\phi(x)=\{y\in V(Q): y\le_Q x\}\subseteq V(Q)$. If $x\le_Q z$, then clearly
$\phi(x)\subseteq \phi(z)$. Conversely, if $\phi(x)\subseteq \phi(z)$, then
$x\in \phi(x)\subseteq \phi(z)$, hence $x\le_Q z$. Therefore
$x\le_Q z$ if and only if $\phi(x)\subseteq \phi(z)$, so $\phi$ is a strong
embedding of $Q$ into $B_{|Q|}$.
\end{proof}
The next two lemmas explain how the ordered setting above interacts with the
unordered poset language used in the definition of strong Boolean Ramsey numbers.

First, we identify the copies of an ordered chain with the ordinary chains in
the underlying poset.

\begin{lemma}\label{lem:copies-of-chain}
Let $\mathbf P=(P,\le_P,\prec_P)$ be a finite poset with a linear extension, and let
$\mathbf C_t$ denote the $t$-chain equipped with its unique linear extension.
Then the copies of $\mathbf C_t$ in $\mathbf P$ are exactly the $t$-chains of the
underlying poset $P$.
\end{lemma}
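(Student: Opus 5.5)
The plan is a direct verification of both inclusions, identifying a copy of $\mathbf C_t$ with its image set. Write $\mathbf C_t=(\{1,\dots,t\},\le,\prec)$, where both $\le$ and $\prec$ are the usual order $1<2<\dots<t$; this is the only linear extension, since $\le$ is already total.

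\emph{Copies give chains.} Let $f$ be a copy of $\mathbf C_t$ in $\mathbf P$. Because $f$ is injective, its image $\{f(1),\dots,f(t)\}$ has exactly $t$ elements. For $i<j$ we have $i\le j$ in $\mathbf C_t$, so the order-preservation condition in the definition of a copy gives $f(i)\le_P f(j)$. Thus every pair in the image is comparable, and the image is a $t$-chain of $P$.

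\emph{Chains give copies.} Let $S$ be a $t$-chain of $P$. Since $S$ is totally ordered by $\le_P$, we can list it as $s_1<_P s_2<_P\cdots<_P s_t$. Define $f(i)=s_i$. We must check both biconditionals.
\begin{itemize}
\item For $\le$: if $i\le j$ then $f(i)\le_P f(j)$. Conversely, if $i>j$ then $s_j<_P s_i$, so $s_i\not\le_P s_j$ by antisymmetry and distinctness.
\item For $\prec$: $\prec_P$ is a linear extension, so $s_i<_P s_j$ implies $s_i\prec_P s_j$. Hence $i<j$ if and only if $f(i)\prec_P f(j)$, using that $\prec_P$ is a strict linear order.
\end{itemize}
So $f$ is a copy with image $S$.

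\emph{Uniqueness.} The copy with a given image is unique. Any copy $g$ with image $S$ must satisfy $g(1)\le_P\cdots\le_P g(t)$, and this forces $g(i)=s_i$. Hence the correspondence between copies and their images is a bijection onto the $t$-chains of $P$.

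There is no real obstacle here. The only point needing care is that compatibility with the linear extension is automatic, because $\prec_P$ extends $\le_P$ and therefore agrees with it on any chain. This is exactly why ordered chain copies coincide with ordinary chains.
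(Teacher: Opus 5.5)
Your proof is correct and follows essentially the same route as the paper: the image of a copy is a $t$-chain, and conversely a $t$-chain gives a copy because $\prec_P$ extends $\le_P$ and therefore agrees with it on the chain. Your extra uniqueness step, that each $t$-chain is the image of exactly one copy, is a useful addition: it makes precise the paper's implicit identification of copies (defined as maps) with their images.
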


\begin{proof}
A copy of $\mathbf C_t$ in $\mathbf P$ is an isomorphic substructure of $\mathbf P$
with underlying set of size $t$. Since the underlying poset of $\mathbf C_t$ is a chain,
its image must be a subset of $P$ of size $t$ that is totally ordered by $\le_P$,
that is, a $t$-chain of $P$.

Conversely, let $X\subseteq P$ be a $t$-chain. Then the restriction of $\le_P$ to $X$
is a linear order. Since $\prec_P$ is a linear extension of $\le_P$, the restriction
of $\prec_P$ to $X$ extends that linear order. But on a finite set there is only one
linear order extending a given linear order, so the restrictions of $\le_P$ and
$\prec_P$ to $X$ coincide. Hence the induced ordered structure on $X$ is isomorphic
to $\mathbf C_t$, and therefore $X$ is a copy of $\mathbf C_t$ in $\mathbf P$.
\end{proof}
Next, we note that an ordered copy immediately yields an induced copy after the
linear extensions are forgotten.
\begin{lemma}\label{lem:ordered-copy-gives-induced}
Let $\mathbf Q=(Q,\le_Q,\prec_Q)$ and $\mathbf P=(P,\le_P,\prec_P)$ be finite
posets with linear extensions. If $\widetilde{\mathbf Q}$ is a copy of
$\mathbf Q$ in $\mathbf P$, then after forgetting the linear extensions,
$\widetilde{\mathbf Q}$ is an induced copy of the poset $Q$ in $P$.
\end{lemma}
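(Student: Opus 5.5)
The statement follows by unwinding the definition of a copy of a finite poset with a linear extension and simply discarding the order $\prec$. Let $f:Q\to P$ be the injective map witnessing that $\widetilde{\mathbf Q}=f(Q)$ is a copy of $\mathbf Q$ in $\mathbf P$. I will show that this same $f$, now viewed as a map between the underlying posets $(Q,\le_Q)$ and $(P,\le_P)$, is a strong embedding in the sense of the Preliminaries.

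The proof has three steps, taken in order.

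First, record the injectivity of $f$, which is part of the definition of a copy.

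Second, note that the definition of a copy contains the biconditional $x\le_Q y \iff f(x)\le_P f(y)$ for all $x,y\in Q$. This is verbatim the condition $Q\subseteq_{\ind}P$ defining a strong embedding, or equivalently an induced copy.

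Third, observe that the second condition in the definition of a copy, $x\prec_Q y\iff f(x)\prec_P f(y)$, is simply not needed. Forgetting the linear extensions just drops this requirement, and dropping a requirement cannot invalidate the remaining ones.

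It follows that $f(Q)$, equipped with the restriction of $\le_P$, is isomorphic to $(Q,\le_Q)$ via $f$. Hence $f(Q)$ is an induced copy of $Q$ in $P$.

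There is no real obstacle here. The only point needing care is to check that the notion of ``copy'' for posets with linear extensions includes the two-sided \emph{if and only if} for $\le$, rather than a one-directional implication. This is exactly how it is defined before Theorem~\ref{th:ordered-poset-ramsey}, so the induced (strong) property holds rather than merely a weak embedding.
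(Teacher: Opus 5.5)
Your proposal is correct and follows essentially the same argument as the paper. Both take the injective map $f$ witnessing the ordered copy, keep the biconditional $x\le_Q y \iff f(x)\le_P f(y)$, and discard the $\prec$-condition, which shows that $f$ is a strong embedding of $(Q,\le_Q)$ into $(P,\le_P)$.
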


\begin{proof}
Since $\widetilde{\mathbf Q}$ is a copy of $\mathbf Q$ in $\mathbf P$, there exists
a bijection $f:Q\to \widetilde Q\subseteq P$ such that for all $x,y\in Q$,
$x\le_Q y$ if and only if $f(x)\le_P f(y)$, and
$x\prec_Q y$ if and only if $f(x)\prec_P f(y)$. In particular,
$x\le_Q y$ if and only if $f(x)\le_P f(y)$. Therefore $f$ is a strong embedding of
the underlying poset $Q$ into the underlying poset $P$, and hence
$\widetilde Q=f(Q)$ is an induced copy of $Q$ in $P$.
\end{proof}
We are now in a position to prove the main result of this section.
\begin{theorem}\label{th:prob1-affirmative}
Let $k\ge 1$, let $t\ge 1$, and let $Q$ be a nonempty finite poset. Then
$\BRs_{k,t}(\mathcal B\mid Q)<\infty$. Consequently, Problem~\ref{prob1} has an
affirmative answer.
\end{theorem}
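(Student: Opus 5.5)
The plan is to apply Theorem~\ref{th:ordered-poset-ramsey} with $\mathbf A=\mathbf C_t$ and a suitable ordered version of $Q$, and then embed the resulting host structure into a Boolean lattice using Proposition~\ref{prop:strong-embed-boolean}.

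\textbf{Trivial cases.} If $k=1$ the statement is immediate once $B_n$ contains an induced copy of $Q$. Indeed, all $t$-chains then share the single color, and Proposition~\ref{prop:strong-embed-boolean} shows $n=|Q|$ suffices. If $t$ exceeds the height of $Q$, then $Q$ has no $t$-chains, so every induced copy of $Q$ is vacuously monochromatic; again $|Q|$ suffices. For $k\ge 2$ we may apply Theorem~\ref{th:ordered-poset-ramsey} directly, since a $k'$-coloring with $k'<k$ colors is in particular a $k$-coloring.

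\textbf{Main case.} Fix an arbitrary linear extension $\prec_Q$ of $Q$ and set $\mathbf B=(Q,\le_Q,\prec_Q)$. Let $\mathbf A=\mathbf C_t$ be the $t$-chain with its unique linear extension. Theorem~\ref{th:ordered-poset-ramsey} gives a finite ordered poset $\mathbf C=(C,\le_C,\prec_C)$ such that every $k$-coloring of the copies of $\mathbf C_t$ in $\mathbf C$ admits a copy $\widetilde{\mathbf B}$ of $\mathbf B$ all of whose $\mathbf C_t$-copies share a color. By Proposition~\ref{prop:strong-embed-boolean}, fix a strong embedding $\phi:C\to B_N$ with $N=|C|$. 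I claim $\BRs_{k,t}(\mathcal B\mid Q)\le N$.

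\textbf{Transferring a coloring.} Let $\chi$ be any $k$-coloring of the $t$-chains of $B_N$. Since $\phi$ is a strong embedding, it maps $t$-chains of $C$ bijectively onto $t$-chains of $B_N$ lying inside $\phi(C)$. So $\chi'(X)=\chi(\phi(X))$ is a $k$-coloring of the $t$-chains of $C$. By Lemma~\ref{lem:copies-of-chain}, these $t$-chains are exactly the copies of $\mathbf C_t$ in $\mathbf C$, so $\chi'$ colors the copies of $\mathbf A$. The Ramsey property yields a copy $\widetilde{\mathbf B}$ of $\mathbf B$ whose copies of $\mathbf C_t$ are monochromatic under $\chi'$. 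By Lemma~\ref{lem:ordered-copy-gives-induced}, $\widetilde B$ is an induced copy of $Q$ in $C$. Applying Lemma~\ref{lem:copies-of-chain} to $\widetilde{\mathbf B}$ itself, every $t$-chain of $\widetilde B$ is a copy of $\mathbf C_t$ contained in $\widetilde{\mathbf B}$, hence is monochromatic. Since a composition of strong embeddings is strong, $\phi(\widetilde B)$ is an induced copy of $Q$ in $B_N$. Its $t$-chains are exactly the $\phi$-images of the $t$-chains of $\widetilde B$, so they all receive one color under $\chi$.

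\textbf{Main obstacle.} The key subtlety is matching the $t$-chains of the induced copy in $B_N$ with the copies of $\mathbf C_t$ "contained in $\widetilde{\mathbf B}$" in the sense of Theorem~\ref{th:ordered-poset-ramsey}. Two facts settle this: a strong embedding preserves and reflects comparability, and every $t$-chain inherits the restricted linear extension automatically by Lemma~\ref{lem:copies-of-chain}. This shows $\BRs_{k,t}(\mathcal B\mid Q)\le N<\infty$, answering Problem~\ref{prob1}.
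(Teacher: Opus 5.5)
Your proposal is correct and follows essentially the same route as the paper. Both arguments apply Theorem~\ref{th:ordered-poset-ramsey} with $\mathbf A=\mathbf C_t$ and $\mathbf B=(Q,\le_Q,\prec_Q)$, identify copies of $\mathbf C_t$ with $t$-chains via Lemma~\ref{lem:copies-of-chain}, forget the linear extension via Lemma~\ref{lem:ordered-copy-gives-induced}, and transfer colorings through a strong embedding of the host into $B_{|C|}$ given by Proposition~\ref{prop:strong-embed-boolean}. The only differences are cosmetic: your trivial cases are $k=1$ or $Q$ having no $t$-chains, whereas the paper uses at most one $t$-chain, and this has no effect on the argument.
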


\begin{proof}
We consider two cases.

Suppose that $k=1$ or $Q$ has at most one $t$-chain.
By Proposition~\ref{prop:strong-embed-boolean}, there exists an integer $M$ such that
$Q\subseteq_{\ind} B_M$. If $k=1$, then every coloring of the $t$-chains of $B_M$
is monochromatic, so the induced copy of $Q$ inside $B_M$ is monochromatic. If $Q$
has at most one $t$-chain, then every induced copy of $Q$ is automatically monochromatic
under any $k$-coloring of the $t$-chains. In either case,
$\BRs_{k,t}(\mathcal B\mid Q)\le M<\infty$.

Suppose that $k\ge 2$ and $Q$ has at least two $t$-chains.
Fix a linear extension $\prec_Q$ of $Q$, and write
$\mathbf Q=(Q,\le_Q,\prec_Q)$. Let $\mathbf C_t$ denote the $t$-element chain
equipped with its unique linear extension.

Apply Theorem~\ref{th:ordered-poset-ramsey} with
$\mathbf A=\mathbf C_t$ and $\mathbf B=\mathbf Q$. Then there exists a finite poset
with a linear extension $\mathbf P=(P,\le_P,\prec_P)$ such that every $k$-coloring
of the copies of $\mathbf C_t$ in $\mathbf P$ contains a copy
$\widetilde{\mathbf Q}$ of $\mathbf Q$ in $\mathbf P$ all of whose copies of
$\mathbf C_t$ have the same color.

By Lemma~\ref{lem:copies-of-chain}, the copies of $\mathbf C_t$ in $\mathbf P$
are exactly the $t$-chains of the underlying poset $P$. Therefore every
$k$-coloring of the $t$-chains of $P$ contains a copy $\widetilde{\mathbf Q}$ of
$\mathbf Q$ such that all $t$-chains contained in $\widetilde{\mathbf Q}$ receive
the same color.

By Lemma~\ref{lem:ordered-copy-gives-induced}, after forgetting the linear extensions,
the copy $\widetilde{\mathbf Q}$ becomes an induced copy of the poset $Q$ in the
underlying poset $P$. Hence every $k$-coloring of the $t$-chains of $P$ contains a
monochromatic induced copy of $Q$.

Finally, by Proposition~\ref{prop:strong-embed-boolean}, there exists an integer $M$
and a strong embedding $\varphi:P\hookrightarrow B_M$. Let $\chi$ be any $k$-coloring
of the $t$-chains of $B_M$. Restrict $\chi$ to the $t$-chains of the induced copy
$\varphi(P)$. Transporting this coloring back to $P$ via $\varphi^{-1}$ gives a
$k$-coloring of the $t$-chains of $P$. By the previous paragraph, this coloring
contains a monochromatic induced copy $P_0\subseteq P$ of $Q$. Since $\varphi$ is a
strong embedding, $\varphi(P_0)$ is an induced copy of $Q$ in $B_M$, and all of its
$t$-chains receive the same color under $\chi$.

Therefore, every $k$-coloring of the $t$-chains of $B_M$ contains a monochromatic
induced copy of $Q$, so $\BRs_{k,t}(\mathcal B\mid Q)\le M<\infty$. This completes
the proof.
\end{proof}

The following corollary is immediate. 
\begin{corollary}\label{cor:existence-ft}
Let $n,p,t$ be integers with $p\ge 2$ and $1\le t\le p\le n$. Then
$f_t^\sharp(n,p,2)$
exists.
\end{corollary}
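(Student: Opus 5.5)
We have to show that $f_t^\sharp(n,p,2)$, the minimum number of colors in a strong $(p,2,t)$-coloring of $B_n$, is a well-defined finite integer. The plan has two parts: a strong $(p,2,t)$-coloring exists at all, and some such coloring uses only finitely many colors. The minimum of a nonempty set of positive integers then exists.

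\textbf{Existence of a valid coloring.} Consider the trivial rainbow coloring that gives every $t$-chain of $B_n$ its own color. This uses $h_n(t)<\infty$ colors. An induced copy of $B_p$ in $B_n$ receives at least two colors exactly when it contains at least two $t$-chains. Since $p\ge 2$ and $1\le t\le p$, Proposition~\ref{th-La2} gives $h_p(t)=\Theta((t+1)^p)$. To avoid relying on an asymptotic statement, one can check $h_p(t)\ge 2$ directly:
\begin{itemize}
\item[] for $t\le p$, the chains $\emptyset\subset\{1\}\subset\cdots\subset[t-1]$ and $\emptyset\subset\{2\}\subset\{1,2\}\subset\cdots\subset[t-1]$ are distinct $t$-chains when $t\ge 2$;
\item[] when $t=1$, the $2^p\ge 4$ singletons are distinct $1$-chains.
\end{itemize}
The case $t=p$ needs separate care, since $t-1=p-1$ leaves no unused coordinate. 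In that case we instead use two different maximal-length chains that pass through different atoms, which is possible because $p\ge2$. Every induced copy of $B_p$ carries the same number $h_p(t)\ge 2$ of $t$-chains, all with distinct colors, so the rainbow coloring is a strong $(p,2,t)$-coloring. The admissible set of color counts is therefore nonempty and bounded above by $h_n(t)$, and its minimum $f_t^\sharp(n,p,2)$ exists.

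\textbf{Link to Theorem~\ref{th:prob1-affirmative}.} To tie this to the Ramsey framework, as the word ``immediate'' suggests, we note the following. By Theorem~\ref{th:prob1-affirmative} applied to $Q=B_p$, $\BRs_{k,t}(\mathcal B\mid B_p)$ is finite for every $k$. Also, $\BRs_{k,t}(\mathcal B\mid B_p)\ge k$ fails to be bounded as $k$ grows, since the rainbow coloring with $k=h_n(t)$ colors shows $\BRs_{k,t}>n$. Hence some $k$ satisfies
\[
\BRs_{k,t}(\mathcal B\mid B_p)>n\ge \BRs_{k-1,t}(\mathcal B\mid B_p),
\]
and by \eqref{eq:ft-Ramsey-equivalence} this $k$ equals $f_t^\sharp(n,p,2)$. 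We would add a boundary convention for $k-1=0$ or handle it by a trivial remark.

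The only mildly delicate step is verifying $h_p(t)\ge2$ for all $1\le t\le p$ with $p\ge 2$, which is where the hypothesis $p\ge2$ is used. Beyond that, the argument is routine.
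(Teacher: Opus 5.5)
Your proof is correct, and its key step is the same as the paper's. The rainbow coloring of $\mathcal T_n(t)$ with $h_n(t)$ colors is a strong $(p,2,t)$-coloring, because every induced copy of $B_p$ carries $h_p(t)\ge 2$ distinct $t$-chains.

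The difference is in how the argument is packaged. The paper first invokes Theorem~\ref{th:prob1-affirmative} to make every $\BRs_{k,t}(\mathcal B\mid B_p)$ finite. It then defines $S_n=\{k:\BRs_{k,t}(\mathcal B\mid B_p)>n\}$, uses the rainbow coloring only to show $S_n\neq\varnothing$, and proves $f_t^\sharp(n,p,2)=\min S_n$. Your first part observes, correctly, that existence already follows from the rainbow coloring alone, since a nonempty set of positive integers has a minimum. So the Arman--R\"odl structural Ramsey theorem plays no essential role in this corollary. What the paper's route adds is the explicit identification of $f_t^\sharp(n,p,2)$ with the Ramsey threshold $\min S_n$, and your second part recovers that as well.

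Two small cleanups:
\begin{itemize}
\item Your displayed pair of chains is ill-formed for $t=2$, since $\{2\}\not\subseteq[t-1]$. The special treatment of $t=p$ is also unnecessary, because for $t\ge 3$ your chains already lie in $B_{t-1}\subseteq B_p$. More simply, a maximal chain of $B_p$ has $p+1$ elements and so contains $\binom{p+1}{t}\ge p+1\ge 3$ distinct $t$-chains for every $1\le t\le p$.
\item No boundary convention is needed in the second part. Since $\BRs_{1,t}(\mathcal B\mid B_p)=p\le n$, the least $k$ with $\BRs_{k,t}(\mathcal B\mid B_p)>n$ automatically satisfies $k\ge 2$.
\end{itemize}
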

\begin{proof}
By Theorem~\ref{th:prob1-affirmative}, for every integer $k\ge 1$, 
$\BRs_{k,t}(\mathcal B\mid B_p)$
exists.
Consider the set
$S_n=\{k\ge 1:\ \BRs_{k,t}(\mathcal B\mid B_p)>n\}$.
We first show that $S_n$ is nonempty. Color each $t$-chain of $B_n$ with a distinct
color. Since every induced copy of $B_p$ contains at least two $t$-chains, no induced
copy of $B_p$ is monochromatic. Therefore, 
$\BRs_{h_n(t),t}(\mathcal B\mid B_p)>n$,
and hence $h_n(t)\in S_n$. Thus $S_n\neq\varnothing$.

Let $m=\min S_n$.
Since $m\in S_n$, we have $\BRs_{m,t}(\mathcal B\mid B_p)>n$. By the definition of the
strong Boolean Ramsey number, this means that there exists an $m$-coloring of the
$t$-chains of $B_n$ with no monochromatic induced copy of $B_p$. Equivalently, there
exists a strong $(p,2,t)$-coloring of $B_n$ with $m$ colors by \eqref{eq:ft-Ramsey-equivalence}.

Now let $1\le k<m$. Since $m$ is the least element of $S_n$, we have $k\notin S_n$, and hence
$\BRs_{k,t}(\mathcal B\mid B_p)\le n$.
Again by the definition of the strong Boolean Ramsey number, every $k$-coloring of the
$t$-chains of $B_n$ contains a monochromatic induced copy of $B_p$. Equivalently, no
strong $(p,2,t)$-coloring of $B_n$ exists with fewer than $m$ colors by \eqref{eq:ft-Ramsey-equivalence}.

Therefore $m$ is exactly the minimum number of colors in a strong $(p,2,t)$-coloring
of $B_n$. Hence
$f_t^\sharp(n,p,2)=m$.
In particular, $f_t^\sharp(n,p,2)$ exists.
\end{proof}

\section{Probabilistic upper bounds of $f_t^{\sharp}(n, p, q)$}
\label{sec3}

Let $A_1,\ldots,A_n$ be events in a probability space $\Omega$. We
say that the graph $\Gamma$ with vertex set $\{A_1,A_2,\ldots,A_n\}$ is a
\emph{dependency graph} if the edge set is defined as
$A_{i}$ and $A_j$ are adjacent if and only if $A_i$ and $A_{j}$ are dependent.

The systematic study of this function was initiated by Erdős and Gyárfás \cite{EG97} in 1997.

\begin{theorem}[\cite{Sp77}, Symmetric Lov\'asz Local Lemma]\label{th-LLL-SV}
Let \(A_1, A_2, \dots, A_m\) be events in a probability space. Let \(\Gamma\) be the dependency graph of \(\{A_i\}\) (edges between dependent events) with maximum degree \(d\). If \(\Pr[A_i] \leq \frac{1}{e(d + 1)}\) for all \(i\), then \(\Pr\left[\bigcap_{i=1}^m \overline{A_i}\right] > 0\).
\end{theorem}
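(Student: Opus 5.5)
We plan the standard inductive proof of the local lemma, due to Erd\H{o}s--Lov\'asz and in this symmetric form to Spencer. First we make the hypothesis precise, since pairwise independence is not enough. We read ``dependency graph'' in the usual sense: each $A_i$ is \emph{mutually} independent of the family $\{A_j : j\ne i,\ A_j \not\sim A_i\}$ of its non-neighbours in $\Gamma$. If $d=0$ the events are mutually independent. Then $\Pr[\bigcap\overline{A_i}]=\prod(1-\Pr[A_i])\ge (1-1/e)^m>0$, and we are done. So we assume $d\ge1$.

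The key claim is the following. For every $S\subseteq[m]$ and every $i\notin S$ with $\Pr[\bigcap_{j\in S}\overline{A_j}]>0$,
\[
\Pr\Bigl[A_i \,\Big|\, \bigcap_{j\in S}\overline{A_j}\Bigr]\le \frac{1}{d+1}.
\]
We prove this by induction on $|S|$. For $S=\varnothing$ it is the hypothesis $\Pr[A_i]\le 1/(e(d+1))\le 1/(d+1)$. For the inductive step we split $S=S_1\cup S_2$. Here $S_1$ is the set of neighbours of $i$ in $S$, so $|S_1|\le d$, and $S_2=S\setminus S_1$. If $S_1=\varnothing$, mutual independence gives the conditional probability equal to $\Pr[A_i]$ directly. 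Otherwise we write
\[
\Pr\Bigl[A_i \,\Big|\, \bigcap_{S}\overline{A_j}\Bigr]=\frac{\Pr\bigl[A_i\cap\bigcap_{S_1}\overline{A_j}\,\big|\,\bigcap_{S_2}\overline{A_j}\bigr]}{\Pr\bigl[\bigcap_{S_1}\overline{A_j}\,\big|\,\bigcap_{S_2}\overline{A_j}\bigr]}.
\]
We bound the numerator above by $\Pr[A_i\mid\bigcap_{S_2}\overline{A_j}]=\Pr[A_i]\le 1/(e(d+1))$, using mutual independence of $A_i$ from the events indexed by $S_2$. For the denominator, list $S_1=\{j_1,\dots,j_r\}$ and expand it as a telescoping product of factors $1-\Pr[A_{j_s}\mid \bigcap_{S_2\cup\{j_1,\dots,j_{s-1}\}}\overline{A_j}]$. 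Each conditioning set is strictly smaller than $S$, so by induction every factor is at least $1-1/(d+1)$. The denominator is therefore at least $(1-\tfrac1{d+1})^{d}\ge 1/e$. Combining the two bounds gives the claim.

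Positivity of the conditioning events must be maintained throughout. This follows by a simultaneous induction, since $\Pr[\bigcap_{j\in S}\overline{A_j}]$ is itself such a telescoping product of factors each at least $d/(d+1)>0$.

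Finally, by the chain rule,
\[
\Pr\Bigl[\bigcap_{i=1}^m\overline{A_i}\Bigr]=\prod_{i=1}^m\Bigl(1-\Pr\Bigl[A_i\,\Big|\,\bigcap_{j<i}\overline{A_j}\Bigr]\Bigr)\ge\Bigl(\frac{d}{d+1}\Bigr)^m>0.
\]
The main obstacle is the inductive step, specifically the bound on the denominator. The factor $e$ in the hypothesis is exactly what absorbs the loss $(1-1/(d+1))^{d}\ge e^{-1}$, so the elementary inequality $(1+1/d)^d\le e$ is the point to check carefully. The other subtle point is using mutual, not pairwise, independence from the non-neighbours in the numerator.
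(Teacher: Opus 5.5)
Your proof is correct and complete. It is the standard inductive proof of the local lemma: you bound $\Pr[A_i\mid\bigcap_{j\in S}\overline{A_j}]\le 1/(d+1)$ by induction on $|S|$ via the neighbour/non-neighbour split, then apply the chain rule, handling $d=0$ separately. The paper gives no proof of its own and simply cites Spencer for this result, so there is nothing to compare beyond the standard argument you reproduce.

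Your insistence on the mutual-independence reading is essential, not pedantic. The definition the paper states just before the lemma (adjacent if and only if dependent) is only pairwise, and under that reading the statement is false. For example, take $(a,b)$ uniform in $\mathbb{F}_q^2$ with $q\ge 3$. The $q+1$ events $\{ax+b=0\}$ ($x\in\mathbb{F}_q$) and $\{a=0\}$ are pairwise independent, each has probability $1/q\le 1/e$, yet together they cover the whole space. The paper's later application is unaffected, because each bad event there is determined by the colours of its own $t$-chains.
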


Combining the three foundational tools established above, namely the exact count of \(t\)-chains in \(B_p\), the asymptotic bounds on the number of strong embeddings of \(B_p\) into \(B_n\), and the symmetric Lov\'asz Local Lemma, we can now derive the main upper bound for the function \(f_t^{\sharp}(n, p, q)\) as follows:
\newtheorem*{mainthm1}{\rm\bf Theorem~\ref{th-upper-Pr}}
\begin{mainthm1}[Restated]
Let \(p, q, t\) be positive integers with $1 \leq t< p+1$ and $1 \leq q \leq h_p(t)$ (where \(h_p(t)\) is defined in Proposition \ref{th-La2}). For all integers \(n \geq p\), there exists a constant \(c = c(p, q, t)\) such that
\begin{equation*}
f_t^{\sharp}(n, p, q) \leq c \cdot 2^{\frac{(n - p) \cdot \binom{p}{\lfloor p/2 \rfloor}(1+o(1))}{h_p(t) - q + 1}}.
\end{equation*}
\end{mainthm1}

\begin{proof}
If \(q=1\), then \(f_t^{\sharp}(n,p,1)=1\), so the conclusion is immediate. We therefore assume that \(q \geq 2\).
We use the probabilistic method combined with Theorem \ref{th-LLL-SV} to construct a valid \((p,q,t)\)-coloring and derive the upper bound for \(f_t^{\sharp}(n,p,q)\). Let
$$
K=c \cdot 2^{\frac{(n - p) \cdot \binom{p}{\lfloor p/2 \rfloor}(1+o(1))}{h_p(t) - q + 1}}.
$$
Recall that \(\mathcal{T}_n(t)\) denotes the set of all \(t\)-chains in \(B_n\).  Define a random coloring \(\chi: \mathcal{T}_n(t) \to \{1, 2, \dots, K\}\) where each \(T \in \mathcal{T}_n(t)\) is assigned a color uniformly at random. Color assignments are independent across distinct \(t\)-chains.

By Proposition \ref{th-La2}, every induced copy of $B_p$ contains exactly $h_p(t)$ $t$-chains.

For each induced copy \(S\subseteq B_n\) with \(S\cong_{\rm ind} B_p\), define the bad event
\[
A_S: |\chi(A_t(S))| \leq q-1.
\]
A valid \((p,q,t)\)-coloring requires no bad events.
By Theorem \ref{th-LLL-SV}, we verify two conditions:
\begin{itemize}
    \item[]  1. \(\Pr[A_S] \leq \frac{1}{e(d + 1)}\) for all induced copies \(S\subseteq B_n\);
\item[]  2. The maximum degree \(d\) of the dependency graph \(\Gamma\) is bounded.
\end{itemize}

Let \(h = h_p(t)\).
Each of the \(h\) \(t\)-chains has \(K\) color choices, so the total number of colorings is $K^h$.
For a fixed induced copy $S\subseteq B_n$ with $S\cong_{\rm ind} B_p$, a coloring of its $t$-chains is called \textit{bad} if it uses at most $q-1$ distinct colors, equivalently, the bad event $A_S$ occurs.
The number of such bad colorings is
$\sum_{s=1}^{q-1}\binom{K}{s} s^h.$

Using \(\binom{K}{s} \leq \frac{K^s}{s!}\) and \(\sum_{s=1}^{q-1} \frac{1}{s!} < e\), we have
\[
\sum_{s=1}^{q-1} \binom{K}{s} s^h \leq (q-1)^h K^{q-1} \sum_{s=1}^{q-1} \frac{1}{s!} \leq e (q-1)^h K^{q-1},
\]
and hence
the probability of \(A_S\) is
\begin{equation}\label{eq:prob-bad-event}
\Pr[A_S] \leq \frac{e (q-1)^h K^{q-1}}{K^h} = e (q-1)^h K^{-(h - q + 1)}.
\end{equation}

Two events \(A_S\) and \(A_{S'}\) are dependent if and only if the corresponding induced copies share at least one \(t\)-chain.
Let \(\mathcal{T}(S)=A_t(S)\). For each \(T \in \mathcal{T}(S)\), define \(\mathcal{F}(T)\) as the set of induced \(B_p\)-copies containing \(T\), excluding \(S\), so \(|\mathcal{F}(T)| \leq g(p,n)\).
By Theorem \ref{number_b_n},
\begin{equation}\label{eq:bound-n-t}
g(p,n)= 2^{(n - p) \cdot \binom{p}{\lfloor p/2 \rfloor}(1+o(1))}.
\end{equation}

The dependent induced copies for $A_S$ are contained in the union of the \(\mathcal{F}(T)\) over all $T\in \mathcal T(S)$. Hence
\[
\operatorname{deg}_{\Gamma}(A_S)
\leq \sum_{T \in \mathcal{T}(S)} |\mathcal{F}(T)|
\leq h\, g(p,n)
\leq h \cdot 2^{(n - p) \cdot \binom{p}{\lfloor p/2 \rfloor}(1+o(1))}.
\]
Therefore,
\begin{equation}\label{eq:bound-degree}
d \leq C \cdot 2^{(n - p) \cdot \binom{p}{\lfloor p/2 \rfloor}(1+o(1))},
\end{equation}
where \(C = h_p(t)\) is constant for fixed \(p,t\).

Since for large \(n\), \(d \geq 1\) and \(d + 1 \leq 2d\), substitute \(\Pr[A_S]\) from Equation \eqref{eq:prob-bad-event} and the upper bound of \(d\) from \eqref{eq:bound-degree}, we have
$$
\Pr[A_S] \cdot e(d + 1)\leq e \cdot (q-1)^h K^{-(h - q + 1)} \cdot e \cdot 2 C \cdot 2^{(n - p) \cdot \binom{p}{\lfloor p/2 \rfloor}(1+o(1))}.
$$
Let \(M = 2 C e^2 (q-1)^h\), a constant depending only on \(p,q,t\), and choose \(c = M^{\frac{1}{h - q + 1}}\). Then
\[\Pr[A_S] \cdot e(d + 1) \leq 1.\]

By Theorem \ref{th-LLL-SV}, we have
\(\Pr\left[\bigcap_{S\cong_{\rm ind} B_p} \overline{A_S}\right] > 0\).
Hence, there is a valid $(p,q,t)$-coloring in $B_{n}$ such that each induced $B_{p}$ copy has at least $q$ colors.
Since \(f_t^\sharp(n,p,q)\) is the minimum number of colors for a strong $(p,q,t)$-coloring, we have
\[f_t^{\sharp}(n,p,q) \leq K=  c \cdot 2^{\frac{(n - p) \cdot \binom{p}{\lfloor p/2 \rfloor}(1+o(1))}{h_p(t) - q + 1}}. \]
\end{proof}
\section[Lower bounds]{Lower bounds for \texorpdfstring{$f_t^\sharp(n,p,q)$}{ftsharp(n,p,q)}}\label{sec4}

We first estimate the extremal parameters \(\LaT_t^\sharp(n,B_m)\) and \(\LT_t^\sharp(n,B_m)\).

Let $h_{n,m}(t)$ denote the number of $t$-chains contained in the symmetric consecutive middle $m$ ranks of $B_n$.

\begin{theorem}\label{th-Lat}
Let $m\ge2$, $1\le t\le m$, and $n\ge m+1$. Then
\[
h_{n,m}(t)
\le
\LaT_t^\sharp(n,B_m)
\le
h_n(t)-\floor{\frac{n}{m+1}}\binom{m}{t-1},
\]
and
\[
\LT_t^\sharp(n,B_m)
\le
h_n(t)-\floor{\frac{n}{m+1}}.
\]
\end{theorem}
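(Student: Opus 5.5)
Three bounds are claimed, and I will prove them in the order they appear.

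\emph{Lower bound on $\LaT_t^\sharp(n,B_m)$.} Take $\mathcal F$ to be the union of the $m$ symmetric consecutive middle ranks of $B_n$. Any induced copy of $B_m$ contains a chain of length $m+1$ (image of a maximal chain $\emptyset\subset\{1\}\subset\cdots\subset[m]$), and these $m+1$ sets have strictly increasing sizes. Hence $\mathcal F$, which meets only $m$ distinct ranks, contains no (even weak) copy of $B_m$. So $\LaT_t^\sharp(n,B_m)\ge |A_t(\mathcal F)|=h_{n,m}(t)$.

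\emph{Upper bound on $\LT_t^\sharp(n,B_m)$.} I will use a packing argument. Put $r=\floor{n/(m+1)}$ and partition $[n]$ into disjoint blocks $X_1,\dots,X_r$ of size $m$ (with $m+1$ elements per block available), plus leftovers. For each $i$, choose a base set $D_i$ and let $S_i=\{D_i\cup Y: Y\subseteq X_i\}$, with $D_i$ disjoint from $X_i$. Each $S_i$ is an induced copy of $B_m$. I want the families $A_t(S_i)$ to be pairwise disjoint, since then a strongly $(t,B_m)$-free $\mathcal C_t$ must omit at least one $t$-chain from each $S_i$, giving $|\mathcal C_t|\le h_n(t)-r$.

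Disjointness of $A_t(S_i)$ follows from disjointness of the sets $S_i$. To get that, take $D_i=X_1\cup\dots\cup X_{i-1}\cup\{z_i\}$, using one extra element $z_i$ per block. This is why $m+1$ elements per block are needed. The structure is then a ``staircase'': every set in $S_i$ contains $z_i$. For $j>i$, sets in $S_j$ contain $z_j$, while sets in $S_i$ lie inside $X_1\cup\cdots\cup X_i\cup\{z_i\}$, which excludes $z_j$. So the $S_i$ are pairwise disjoint. This simple disjointness construction is the step I would check most carefully.

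\emph{Upper bound on $\LaT_t^\sharp(n,B_m)$.} Let $\mathcal F$ contain no induced $B_m$. Then $\mathcal F$ misses at least one element $x_i\in S_i$ for each $i$. Every $t$-chain of $B_n$ through $x_i$ is absent from $A_t(\mathcal F)$. Inside $S_i\cong B_m$, the element $x_i$ lies in at least $\binom{m}{t-1}$ $t$-chains. To see this, $x_i$ lies on a maximal chain of $S_i$ with $m+1$ elements, and choosing $t-1$ others from the remaining $m$ gives $\binom{m}{t-1}$ chains.

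These sets of lost chains for different $i$ are disjoint, because they are $t$-chains of the disjoint families $S_i$. However, chains of $B_n$ through $x_i$ that leave $S_i$ could be shared between different indices. To avoid double counting, I count only chains entirely inside $S_i$; those are distinct across $i$ since the $S_i$ are disjoint. So at least $r\binom{m}{t-1}$ $t$-chains of $B_n$ are missing from $A_t(\mathcal F)$, which gives the bound.

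The main care point is the disjoint-copies construction. The counting afterward is routine.
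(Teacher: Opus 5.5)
Your proof is correct and takes essentially the paper's route: the middle-$m$-ranks family gives the lower bound, and $\lfloor n/(m+1)\rfloor$ pairwise disjoint induced copies of $B_m$, built on blocks of $m+1$ coordinates each with one distinguished element, force one omitted element (for $\LaT_t^\sharp$) or one omitted $t$-chain (for $\LT_t^\sharp$) per copy. The differences are cosmetic. The paper takes the base set to be just $\{z_i\}$; your staircase prefix $X_1\cup\dots\cup X_{i-1}$ is unnecessary, since $z_j\notin X_i\cup\{z_i\}$ already makes the copies disjoint. The paper also counts the $\binom{m}{t-1}$ chains through $x_i$ by a Vandermonde sum, whereas you take $t$-subsets of one maximal chain through $x_i$, which is equally valid.
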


\begin{proof}
Let
\[
\mathcal M_{n,m}
=
\bigcup_{k=r}^{r+m-1}\mathcal S_k(B_n),
\qquad
r=\floor{\frac{n-m}{2}},
\]
where \(\mathcal S_k(B_n)=\{X\subseteq[n]:|X|=k\}\). Since \(\mathcal M_{n,m}\) has height \(m\), it contains no induced copy of \(B_m\), whose height is \(m+1\). Hence
$\LaT_t^\sharp(n,B_m)\ge h_{n,m}(t)$.

Set $s=\floor{\frac{n}{m+1}}$.
Partition the first \((m+1)s\) coordinates into disjoint blocks
\[
J_k=\{(k-1)(m+1)+1,\dots,k(m+1)\},
\qquad 1\le k\le s.
\]
Fix a distinguished element \(d_k\in J_k\) and write \(I_k=J_k\setminus\{d_k\}\). Define
\[
Q_k=\bigl\{\{d_k\}\cup A:A\subseteq I_k\bigr\}.
\]
Each \(Q_k\) is an induced copy of \(B_m\), and the families \(Q_1,\dots,Q_s\) are pairwise disjoint.

To prove the upper bound on \(\LaT_t^\sharp(n,B_m)\), let \(\mathcal F\subseteq B_n\) be strongly \(B_m\)-free. Then \(\mathcal F\cap Q_k\neq Q_k\) for every \(k\), so choose \(x_k\in Q_k\setminus \mathcal F\). Write \(x_k=\{d_k\}\cup A_k\) with \(|A_k|=r_k\). Fix an arbitrary linear ordering of \(I_k\). For every choice of subsets
\[
D\subseteq A_k,\qquad U\subseteq I_k\setminus A_k,\qquad |D|+|U|=t-1,
\]
we obtain a \(t\)-chain in \(Q_k\) through \(x_k\) by deleting the elements of \(D\) from \(x_k\) one by one in the chosen order and then adding the elements of \(U\) one by one in the chosen order. Distinct pairs \((D,U)\) yield distinct \(t\)-chains. Hence \(x_k\) belongs to at least
\[
\sum_{i=0}^{t-1}\binom{r_k}{i}\binom{m-r_k}{t-1-i}
=
\binom{m}{t-1}
\]
distinct \(t\)-chains of \(Q_k\), by Vandermonde's identity.
All these \(t\)-chains are absent from \(A_t(\mathcal F)\), and because the \(Q_k\) are pairwise disjoint, the omitted \(t\)-chains coming from different \(k\) are distinct. Therefore
\[
|A_t(\mathcal F)|\le h_n(t)-s\binom{m}{t-1}.
\]
Taking the maximum over all strongly \(B_m\)-free families \(\mathcal F\) yields
\[
\LaT_t^\sharp(n,B_m)\le h_n(t)-\floor{\frac{n}{m+1}}\binom{m}{t-1}.
\]

To prove the upper bound on \(\LT_t^\sharp(n,B_m)\), let \(\mathcal C_t\subseteq \mathcal T_n(t)\) be strongly \((t,B_m)\)-free. Then \(A_t(Q_k)\nsubseteq \mathcal C_t\) for every \(k\), so choose \(T_k\in A_t(Q_k)\setminus \mathcal C_t\). Since the copies \(Q_1,\dots,Q_s\) are pairwise disjoint, the sets \(A_t(Q_k)\) are pairwise disjoint, and thus the omitted chains \(T_1,\dots,T_s\) are distinct. Hence
\[
|\mathcal C_t|\le h_n(t)-s=h_n(t)-\floor{\frac{n}{m+1}}.
\]
Taking the maximum over all strongly \((t,B_m)\)-free sets \(\mathcal C_t\) gives the desired bound.
\end{proof}

The following double-counting lemma is the key ingredient for our second lower bound on \(f_t^\sharp\).

\begin{lemma}\label{lem-corrected-tchain-concentration}
Let $n\ge p\ge2$ and $1\le t\le p$. Let $\mathcal T\subseteq \mathcal T_n(t)$, and for each $T\in \mathcal T_n(t)$ let $C_T(p,n,t)$ denote the number of induced copies of $B_p$ in $B_n$ that contain $T$. Set
\[
C_{\min}(p,n,t;\mathcal T)=\min_{T\in \mathcal T} C_T(p,n,t).
\]
If
\[
|\mathcal T|
\ge
s\,\frac{g(p,n)}{C_{\min}(p,n,t;\mathcal T)}
\]
for some integer $s$ with $1\le s\le h_p(t)$, then there exists an induced copy $S\subseteq B_n$ of $B_p$ such that
$|A_t(S)\cap \mathcal T|\ge s$.
\end{lemma}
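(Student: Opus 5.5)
We double count the incidences between the chains of $\mathcal T$ and the induced copies of $B_p$. Let $\mathcal S$ denote the set of all induced copies $S\subseteq B_n$ of $B_p$, so that $|\mathcal S|=g(p,n)$. Define
\[
N=\bigl|\{(T,S): T\in\mathcal T,\ S\in\mathcal S,\ T\in A_t(S)\}\bigr|.
\]

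First we count $N$ by chains. Each $T\in\mathcal T$ lies in exactly $C_T(p,n,t)$ copies, and $C_T(p,n,t)\ge C_{\min}(p,n,t;\mathcal T)$. Hence
\[
N=\sum_{T\in\mathcal T}C_T(p,n,t)\ \ge\ |\mathcal T|\,C_{\min}(p,n,t;\mathcal T)\ \ge\ s\,g(p,n),
\]
where the last step uses the hypothesis on $|\mathcal T|$. Before this multiplication we should check that $C_{\min}>0$, since otherwise the hypothesis is meaningless. We may assume $\mathcal T\neq\varnothing$. Every $t$-chain $T=\{X_1\subsetneq\cdots\subsetneq X_t\}$ of $B_n$ with $t\le p\le n$ does lie in some induced copy of $B_p$. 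One way to see this is to extend $T$ to a maximal chain $\varnothing=Y_0\subsetneq\cdots\subsetneq Y_n=[n]$. We then group the $n$ successive added elements into $p$ consecutive nonempty blocks $E_1,\dots,E_p$, chosen so that each $X_i$ is a union of initial blocks; this is possible because $t\le p$ and $t$ cut points (possibly including $\varnothing$ or $[n]$) can always be refined to $p$ blocks. The family $\{\bigcup_{j\in J}E_j : J\subseteq[p]\}$ is then an induced copy of $B_p$ containing $T$. So $C_T\ge1$ for every $T$, and the hypothesis makes sense. If the paper intends $C_{\min}=0$ to be excluded implicitly, this check simply confirms that.

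Next we count $N$ by copies: $N=\sum_{S\in\mathcal S}|A_t(S)\cap\mathcal T|$. This is a sum of $g(p,n)$ terms whose total is at least $s\,g(p,n)$, so by averaging some $S$ satisfies $|A_t(S)\cap\mathcal T|\ge s$. That is exactly the claim. The constraint $s\le h_p(t)$ only guarantees that the conclusion is not vacuous, since $|A_t(S)|=h_p(t)$ by Proposition~\ref{th-La2}.

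The main point requiring care is the positivity of $C_{\min}$, which the block construction above handles. Everything else is a single averaging step.
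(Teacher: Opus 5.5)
Your double counting of the pairs $(T,S)$, followed by averaging over the $g(p,n)$ induced copies, is exactly the paper's argument, and that part proves the lemma. The step you single out as ``the main point requiring care'' is both unnecessary and incorrectly justified.

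It is unnecessary because the lemma only has content when $C_{\min}(p,n,t;\mathcal T)>0$. If $C_{\min}=0$, the right-hand side of the hypothesis is undefined (or $+\infty$, since $s\ge1$ and $g(p,n)\ge1$), so there is nothing to prove. The paper's proof likewise just converts the hypothesis into ``the average is at least $s$'', implicitly assuming $C_{\min}>0$.

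It is incorrectly justified when $t=p$. Your blocks $E_1,\dots,E_p$ partition all of $[n]$, so every copy you build contains $\varnothing$ and $[n]$, and only the $p-1$ interior cut points are free. A $p$-chain with $X_1\neq\varnothing$ and $X_p\neq[n]$ forces $p$ interior cuts, that is, $p+1$ blocks. Refining can only add cuts, so such a chain can never fit. Concretely, take $n=3$, $p=t=2$ and $T=\{\{1\},\{1,2\}\}$. Every copy your construction produces has the form $\{\varnothing,E_1,E_2,[3]\}$ with $E_1,E_2$ incomparable, so it cannot contain $T$. Yet $T$ does lie in the induced copy $\{\varnothing,\{1\},\{2\},\{1,2\}\}$.

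The fix is to drop the requirement that the copy run from $\varnothing$ to $[n]$. Write $X_i=Y_{a_i}$, and pick any $p+1$ positions $c_0<c_1<\cdots<c_p$ in $\{0,\dots,n\}$ containing $a_1,\dots,a_t$; this is possible since $t\le p+1\le n+1$. Then $\{Y_{c_0}\cup\bigcup_{j\in J}(Y_{c_j}\setminus Y_{c_{j-1}}):J\subseteq[p]\}$ is an induced copy of $B_p$ containing every $Y_{c_k}$, hence containing $T$. With this repair, your side remark that $C_T\ge1$ for every $t$-chain is correct. That fact matters for applying the lemma, but not for proving it.
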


\begin{proof}
For each induced copy $S\subseteq B_n$ of $B_p$, let $t(S)=|A_t(S)\cap \mathcal T|$.

Count incidence pairs $(T,S)$ with $T\in\mathcal T$, \(S\cong_{\ind} B_p\), and \(T\in A_t(S)\). On the one hand,
\[
\sum_{S\cong_{\ind} B_p} t(S)
=
\sum_{T\in\mathcal T} C_T(p,n,t)
\ge
|\mathcal T|\,C_{\min}(p,n,t;\mathcal T).
\]
On the other hand, there are exactly \(g(p,n)\) induced copies of \(B_p\) in \(B_n\). Hence the average value of \(t(S)\) over all such copies is at least
\[
\frac{|\mathcal T|\,C_{\min}(p,n,t;\mathcal T)}{g(p,n)}.
\]
If the displayed quantity is at least \(s\), then some copy \(S\) satisfies \(t(S)\ge s\).
\end{proof}

\begin{theorem}\label{th:ftsharp-lower-bound-ultimate-rigorous}
Let $n\ge p\ge2$, let $1\le t\le p$, and let $2\le q\le h_p(t)$. Then
\[
f_t^\sharp(n,p,q)
\ge
\max\left\{
\ceil{\frac{h_n(t)}{\LT_t^\sharp(n,B_p)}},
\ceil{\frac{h_n(t)\,C_{\min}(p,n,t;\mathcal T_n(t))}{g(p,n)\,h_p(t)}}
\right\}.
\]
\end{theorem}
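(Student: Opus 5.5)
We argue by contradiction and pigeonhole on the color classes. Suppose $\chi:\mathcal T_n(t)\to[k]$ is a strong $(p,q,t)$-coloring of $B_n$ with $k$ colors, and let $\mathcal C^{(1)},\dots,\mathcal C^{(k)}$ be its color classes, which partition $\mathcal T_n(t)$. By pigeonhole some class, say $\mathcal C=\mathcal C^{(i)}$, satisfies $|\mathcal C|\ge h_n(t)/k$. We will show that each lower bound follows by examining how such a large class sits relative to induced copies of $B_p$.

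\emph{First bound.} Since $q\ge 2$, no induced copy $S$ of $B_p$ can have all its $t$-chains in a single color. Equivalently, there is no induced copy $S$ with $A_t(S)\subseteq \mathcal C^{(j)}$, so every color class is strongly $(t,B_p)$-free in the sense of Definition~\ref{def:L_t}. By the definition of $\LT_t^\sharp(n,B_p)$ we get $|\mathcal C^{(j)}|\le \LT_t^\sharp(n,B_p)$ for each $j$. Summing over $j$ gives $h_n(t)\le k\,\LT_t^\sharp(n,B_p)$, so $k\ge h_n(t)/\LT_t^\sharp(n,B_p)$, and integrality gives the ceiling. This step only needs $q\ge2$ (through Proposition~\ref{prop-ft-q-ineq}-type reasoning, a $(p,q,t)$-coloring is a $(p,2,t)$-coloring) and the definition of $\LT_t^\sharp$.

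\emph{Second bound.} We apply Lemma~\ref{lem-corrected-tchain-concentration} with $\mathcal T=\mathcal C$ and $s=h_p(t)$. Since $\mathcal C\subseteq\mathcal T_n(t)$, we have $C_{\min}(p,n,t;\mathcal C)\ge C_{\min}(p,n,t;\mathcal T_n(t))$. Suppose, for contradiction, that $k<h_n(t)C_{\min}(p,n,t;\mathcal T_n(t))/(g(p,n)h_p(t))$. Then
\[
|\mathcal C|\ge \frac{h_n(t)}{k}> \frac{h_p(t)\,g(p,n)}{C_{\min}(p,n,t;\mathcal T_n(t))}\ge \frac{h_p(t)\,g(p,n)}{C_{\min}(p,n,t;\mathcal C)}.
\]
The lemma then yields an induced copy $S$ with $|A_t(S)\cap\mathcal C|\ge h_p(t)=|A_t(S)|$. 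Thus $A_t(S)\subseteq\mathcal C$, so $S$ is monochromatic, which contradicts $q\ge2$. Hence $k\ge h_n(t)C_{\min}/(g(p,n)h_p(t))$, and the ceiling follows by integrality. Taking the maximum of the two bounds gives the claimed inequality.

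The main obstacle is the bookkeeping in the second bound: making sure the monotonicity $C_{\min}(\cdot;\mathcal C)\ge C_{\min}(\cdot;\mathcal T_n(t))$ goes in the helpful direction, and that the strict versus non-strict inequalities line up with the hypothesis of Lemma~\ref{lem-corrected-tchain-concentration}. The lemma only requires $\ge$, so the strict inequality derived from the contradiction hypothesis is more than enough. Both bounds use only monochromatic-copy avoidance, so the argument gives a bound for $f_t^\sharp(n,p,2)$ that transfers to every $q\ge2$ by Proposition~\ref{prop-ft-q-ineq}.
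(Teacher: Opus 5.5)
Your proposal is correct and follows the paper's proof essentially step for step. Since $q\ge2$, every color class is strongly $(t,B_p)$-free, which gives the first bound; your sum over all classes is equivalent to the paper's pigeonhole on the largest class. The second bound comes from applying Lemma~\ref{lem-corrected-tchain-concentration} with $s=h_p(t)$ to the largest class, using $C_{\min}(p,n,t;\mathcal C)\ge C_{\min}(p,n,t;\mathcal T_n(t))$.

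Your contradiction form of the second step is just the contrapositive of the paper's direct derivation. The closing appeal to Proposition~\ref{prop-ft-q-ineq} is harmless but unnecessary, since your argument already handles any $q\ge2$ directly.
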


\begin{proof}
Let \(c=f_t^\sharp(n,p,q)\) and let
$\mathcal T_n(t)=\mathcal T_1\sqcup \cdots \sqcup \mathcal T_c$
be the partition into color classes in a strong \((p,q,t)\)-coloring of \(B_n\). Since \(q\ge2\), no color class contains all the \(t\)-chains of an induced copy of \(B_p\). Equivalently, each \(\mathcal T_i\) is strongly \((t,B_p)\)-free. By the pigeonhole principle there exists \(i_0\) such that
$|\mathcal T_{i_0}|\ge \frac{h_n(t)}{c}$.
Since \(\mathcal T_{i_0}\) is strongly \((t,B_p)\)-free,
$|\mathcal T_{i_0}|\le \LT_t^\sharp(n,B_p)$,
and therefore
\[
c\ge \ceil{\frac{h_n(t)}{\LT_t^\sharp(n,B_p)}}.
\]

For the second bound, apply Lemma~\ref{lem-corrected-tchain-concentration} to \(\mathcal T=\mathcal T_{i_0}\) with \(s=h_p(t)\). If
\[
|\mathcal T_{i_0}|
\ge
h_p(t)\,\frac{g(p,n)}{C_{\min}(p,n,t;\mathcal T_{i_0})},
\]
then some induced copy of \(B_p\) would have all its \(t\)-chains in \(\mathcal T_{i_0}\), contradicting that \(\mathcal T_{i_0}\) is strongly \((t,B_p)\)-free. Hence
$|\mathcal T_{i_0}|\,C_{\min}(p,n,t;\mathcal T_{i_0})
<
g(p,n)\,h_p(t)$.

Since \(C_{\min}(p,n,t;\mathcal T_n(t))\le C_{\min}(p,n,t;\mathcal T_{i_0})\) and \(|\mathcal T_{i_0}|\ge h_n(t)/c\), we obtain
\[
\frac{h_n(t)}{c}\,C_{\min}(p,n,t;\mathcal T_n(t))
\le
g(p,n)\,h_p(t),
\]
which implies
\[
c\ge
\ceil{\frac{h_n(t)\,C_{\min}(p,n,t;\mathcal T_n(t))}{g(p,n)\,h_p(t)}}.
\]
Taking the maximum of the two bounds completes the proof.
\end{proof}
Combining Theorem~\ref{th:ftsharp-lower-bound-ultimate-rigorous} with Theorem~\ref{th-Lat}, we show Theorem \ref{th123}.
\newtheorem*{mainthm2}{\rm\bf Theorem~\ref{th123}}
\begin{mainthm2}[Restated]
Let $n\ge 2p+1$, let $1\le t\le p$, and let $2\le q\le h_p(t)$. Then
\[
f_t^\sharp(n,p,q)
\ge
\max\left\{
\ceil{\frac{h_n(t)}{h_n(t)-\floor{\frac{n}{p+1}}}},
\ceil{\frac{h_n(t)\,C_{\min}(p,n,t;\mathcal T_n(t))}{g(p,n)\,h_p(t)}}
\right\}.
\]
\end{mainthm2}

\section{Applications of the Erd\H{o}s-Gy\'{a}rf\'{a}s function}\label{sec5}

With the upper bound for $f_t^\sharp(n,p,q)$ established, we next give logarithmic lower bounds for strong Boolean Ramsey numbers.
For a finite poset $Q$, let $f_t^\sharp(n,Q,q)$ denote the minimum number of colors in a coloring of the $t$-chains of $B_n$ such that every induced copy of $Q$ in $B_n$ uses at least $q$ distinct colors.

\begin{proposition}\label{prop:general-log-lower}
Let $Q$ be a nonempty finite poset, let $m=|Q|$, and let $h(Q,t)\ge2$. Then there exists a constant $c_Q=c_Q(t)>0$ such that
\[
f_t^\sharp(n,Q,2)
\le
c_Q\,2^{\frac{mn}{h(Q,t)-1}}
\]
for all sufficiently large $n$. Consequently,
\[
\BRs_{k,t}(\mathcal B\mid Q)
\ge
\frac{h(Q,t)-1}{m}\bigl(\log_2 k-\log_2 c_Q\bigr)
\qquad (k\to\infty).
\]
\end{proposition}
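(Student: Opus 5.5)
I will mirror the proof of Theorem~\ref{th-upper-Pr}, replacing $B_p$ by an arbitrary finite poset $Q$ and replacing the strong-embedding count $g(p,n)$ by a crude count of induced copies of $Q$ in $B_n$. Set $h=h(Q,t)\ge2$. Color the $t$-chains of $B_n$ independently and uniformly from $K$ colors. For each induced copy $S$ of $Q$ in $B_n$, let $A_S$ be the event that all $h$ of its $t$-chains receive the same color. Then $\Pr[A_S]=K^{-(h-1)}$; this is the case $q=2$ of \eqref{eq:prob-bad-event}, and here it is an equality.

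\textbf{Dependency degree.} The events $A_S$ and $A_{S'}$ are dependent only if $S$ and $S'$ share a $t$-chain. The number of induced copies of $Q$ in $B_n$ is at most the number of $m$-element subsets of $B_n$, which is at most $\binom{2^n}{m}\le 2^{mn}$. Hence the maximum degree $d$ of the dependency graph satisfies $d\le h\cdot 2^{mn}$. This crude count is deliberately used in place of an analogue of Theorem~\ref{number_b_n}: it yields exactly the exponent $mn$ that appears in the statement, and it avoids any need to count strong embeddings of a general $Q$.

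\textbf{Applying the local lemma.} By Theorem~\ref{th-LLL-SV}, we need $K^{-(h-1)}\, e(d+1)\le1$. It suffices to take
\[
K\ge (2eh)^{\frac{1}{h-1}}\,2^{\frac{mn}{h-1}}.
\]
Set $c_Q=2(2eh)^{1/(h-1)}$. The extra factor of $2$ absorbs rounding $K$ up to an integer, and the estimate $d+1\le 2d$ holds once $n$ is large (so that $d\ge1$ whenever any copy of $Q$ exists). With this choice some coloring avoids all the $A_S$, which gives $f_t^\sharp(n,Q,2)\le c_Q 2^{mn/(h-1)}$. I also need $h(Q,t)\ge2$ only to ensure that the exponent is finite and that the events are nontrivial.

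\textbf{The Ramsey consequence.} I will use the analogue of \eqref{eq:ft-Ramsey-equivalence} for $Q$: if $f_t^\sharp(n,Q,2)\le k$, then some $k$-coloring of the $t$-chains of $B_n$ has no monochromatic induced $Q$. Extra colors can simply go unused, and $k\ge2$ is fine. Therefore $\BRs_{k,t}(\mathcal B\mid Q)>n$. Given large $k$, choose the largest $n$ with $c_Q2^{mn/(h-1)}\le k$, namely
\[
n=\flr{\frac{h-1}{m}(\log_2 k-\log_2 c_Q)}.
\]
This $n$ tends to infinity with $k$, so it is "sufficiently large". It follows that $\BRs_{k,t}(\mathcal B\mid Q)\ge n+1\ge \frac{h-1}{m}(\log_2k-\log_2c_Q)$.

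The main obstacle is the "sufficiently large $n$" requirement. I have to make sure the bound $d\le h2^{mn}$ and the condition $d\ge1$ are legitimate. Theorem~\ref{th:prob1-affirmative} together with Proposition~\ref{prop:strong-embed-boolean} guarantees that induced copies of $Q$ exist once $n\ge|Q|$. If there are no copies at all, a single color trivially works, so that case poses no problem. Beyond this, everything is routine bookkeeping with constants.
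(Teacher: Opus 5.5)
Your proposal is correct and follows essentially the same route as the paper: a uniform random coloring of the $t$-chains, the crude bound $2^{mn}$ on the number of induced copies of $Q$ giving dependency degree at most $h\,2^{mn}$, the symmetric local lemma, and then $n=\lfloor \frac{h-1}{m}(\log_2 k-\log_2 c_Q)\rfloor$ for the Ramsey consequence. The differences are cosmetic. You use the exact value $\Pr[A_S]=K^{-(h-1)}$ and a factor $2$ for integer rounding, getting $c_Q=2(2eh)^{1/(h-1)}$ instead of the paper's $(2e^2h)^{1/(h-1)}$. Your worry about needing $d\ge1$ is unnecessary, since $d+1\le h\,2^{mn}+1\le 2h\,2^{mn}$ holds outright.
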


\begin{proof}
Write $h=h(Q,t)$ and $m=|Q|$. Color each $t$-chain of $B_n$ independently and uniformly from $[K]$. For an induced copy $S\subseteq B_n$ of $Q$, let $A_S$ be the bad event that the $t$-chains of $S$ use at most one color. As in the proof of Theorem~\ref{th-upper-Pr}, we have
$\Pr[A_S]\le eK^{-(h-1)}$.

The number of induced copies of $Q$ in $B_n$ is at most $(2^n)^m=2^{mn}$, because each copy is determined by the images of the $m$ elements of $Q$. Hence the dependency degree of the bad-event graph is at most $h\,2^{mn}$. Choosing
\[
K=c_Q\,2^{\frac{mn}{h-1}}
\qquad\text{with}\qquad
c_Q=(2e^2h)^{1/(h-1)},
\]
the symmetric Lov\'asz local lemma implies that with positive probability no bad event occurs. Therefore
\[
f_t^\sharp(n,Q,2)
\le
c_Q\,2^{\frac{mn}{h(Q,t)-1}}
\]
for all sufficiently large $n$.

Now let
\[
n_k=\left\lfloor \frac{h(Q,t)-1}{m}\bigl(\log_2 k-\log_2 c_Q\bigr)\right\rfloor.
\]
For all sufficiently large $k$, the first part of the proof gives $f_t^\sharp(n_k,Q,2)\le k$. By the definition of the strong Boolean Ramsey number, this implies
\[
\BRs_{k,t}(\mathcal B\mid Q)>n_k,
\]
and hence
\[
\BRs_{k,t}(\mathcal B\mid Q)
\ge
\frac{h(Q,t)-1}{m}\bigl(\log_2 k-\log_2 c_Q\bigr).
\]
\end{proof}

For Boolean lattices themselves, Theorem~\ref{th-upper-Pr} yields a sharper logarithmic lower bound.

\begin{corollary}\label{cor-R-lower}
Let $p\ge2$ and $1\le t\le p$, and let $c=c(p,2,t)$ be the constant from Theorem~\ref{th-upper-Pr}. Then, as $k\to\infty$,
\[
\BRs_{k,t}(\mathcal B\mid B_p)
\ge
p+\frac{(h_p(t)-1)\bigl(\log_2 k-\log_2 c\bigr)}{\binom{p}{\floor{p/2}}(1+o(1))}.
\]
\end{corollary}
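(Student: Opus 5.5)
The plan is to mirror the Ramsey/Erd\H{o}s--Gy\'arf\'as equivalence \eqref{eq:ft-Ramsey-equivalence}, now fed with the explicit upper bound of Theorem~\ref{th-upper-Pr} at $q=2$. For $q=2$ that theorem reads
\[
f_t^\sharp(n,p,2)\le c\cdot 2^{\frac{(n-p)\binom{p}{\lfloor p/2\rfloor}(1+o(1))}{h_p(t)-1}},
\]
with $c=c(p,2,t)$ and the $o(1)$ taken as $n\to\infty$. Given $k$, we will choose the largest integer $n_k$ whose right-hand side is at most $k$. A strong $(p,2,t)$-coloring of $B_{n_k}$ then exists with at most $k$ colors, hence with exactly $k$ colors after adding unused colors. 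Such a coloring has no monochromatic induced $B_p$, so $\BRs_{k,t}(\mathcal B\mid B_p)>n_k$. Note that Proposition~\ref{prop-ftsharp-n-ineq} is not even needed here, because we exhibit the coloring directly at $n_k$.

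The computation goes as follows. Write $\varepsilon(n)$ for the $o(1)$ term. The condition $c\,2^{(n-p)\binom{p}{\lfloor p/2\rfloor}(1+\varepsilon(n))/(h_p(t)-1)}\le k$ is equivalent to
\[
n\le p+\frac{(h_p(t)-1)(\log_2 k-\log_2 c)}{\binom{p}{\lfloor p/2\rfloor}(1+\varepsilon(n))}.
\]
Since the right-hand side tends to infinity with $k$, we get $n_k\to\infty$ as $k\to\infty$, and therefore $\varepsilon(n_k)\to0$. We then take $n_k$ to be the floor of this expression, evaluated at $n=n_k$. Equivalently, the largest $n$ satisfying the inequality differs from the displayed quantity by at most $1$, and that additive $1$ is absorbed into the $(1+o(1))$ factor because $\log_2 k\to\infty$. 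This yields
\[
\BRs_{k,t}(\mathcal B\mid B_p)\ge n_k+1\ge p+\frac{(h_p(t)-1)(\log_2 k-\log_2 c)}{\binom{p}{\lfloor p/2\rfloor}(1+o(1))}.
\]

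The main obstacle is the self-referential dependence of the $o(1)$ on $n$ itself. The step I would write most carefully is a fixed-point/monotonicity argument. For large $n$, the function $n\mapsto (n-p)(1+\varepsilon(n))$ is eventually increasing; if needed, replace $\varepsilon$ by $\sup_{m\ge n}\varepsilon(m)$, which only weakens Theorem~\ref{th-upper-Pr} and keeps it valid. With that monotonicity, the set of admissible $n$ is an initial segment, and its maximum $n_k$ tends to infinity. The asymptotic claim is then valid in the regime $k\to\infty$, and the requirement $n\ge p$ from Theorem~\ref{th-upper-Pr} holds automatically for large $k$.
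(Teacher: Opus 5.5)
Your proposal is correct and follows essentially the paper's route. Both arguments apply Theorem~\ref{th-upper-Pr} with $q=2$, invert the bound to find $n$ with $f_t^\sharp(n,p,2)\le k$, and conclude $\BRs_{k,t}(\mathcal B\mid B_p)>n$. The only difference is in handling the $o(1)$: the paper fixes $\varepsilon>0$, takes $n_{k,\varepsilon}=\bigl\lfloor p+(h_p(t)-1)(\log_2 k-\log_2 c)/((1+\varepsilon)\binom{p}{\lfloor p/2\rfloor})\bigr\rfloor$, and then lets $\varepsilon\to0$, whereas you take the largest admissible $n_k$.

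Your monotonicity detour is unnecessary, and as stated it is not justified. Neither $(n-p)(1+\varepsilon(n))$ nor its version with $\sup_{m\ge n}\varepsilon(m)$ need be eventually increasing. You do not need it: the admissible set is finite, its maximum satisfies $n_k\to\infty$, and the inadmissibility of $n_k+1$ directly gives $\BRs_{k,t}(\mathcal B\mid B_p)\ge n_k+1>p+(h_p(t)-1)(\log_2 k-\log_2 c)/\bigl(\binom{p}{\lfloor p/2\rfloor}(1+\varepsilon(n_k+1))\bigr)$.
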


\begin{proof}
Set
\[
C_p=\binom{p}{\floor{p/2}}.
\]
By Theorem~\ref{th-upper-Pr}, for every $\varepsilon>0$ there exists $n_0(\varepsilon)$ such that for all $n\ge n_0(\varepsilon)$,
\[
f_t^\sharp(n,p,2)
\le
c\,2^{\frac{(1+\varepsilon)(n-p)C_p}{h_p(t)-1}}.
\]
Fix $\varepsilon>0$ and define
\[
n_{k,\varepsilon}=\left\lfloor p+\frac{(h_p(t)-1)\bigl(\log_2 k-\log_2 c\bigr)}{(1+\varepsilon)C_p}\right\rfloor.
\]
For all sufficiently large $k$, we have $n_{k,\varepsilon}\ge n_0(\varepsilon)$ and therefore $f_t^\sharp(n_{k,\varepsilon},p,2)\le k$. By \eqref{eq:ft-Ramsey-equivalence}, this implies
\[
\BRs_{k,t}(\mathcal B\mid B_p)>n_{k,\varepsilon}.
\]
Since $\varepsilon>0$ is arbitrary, we obtain
\[
\BRs_{k,t}(\mathcal B\mid B_p)
\ge
p+\frac{(h_p(t)-1)\bigl(\log_2 k-\log_2 c\bigr)}{C_p(1+o(1))},
\]
as claimed.
\end{proof}

We now rigorously compare Corollary~\ref{cor-R-lower} with the lower bound of Katona, Mao, Ozeki, and Wang from Corollary~\ref{cor-lower-Boolean-uniform}.

Fix integers $p\ge2$, $k\ge3$, and set
\[
C_p=\binom{p}{\floor{p/2}}.
\]
Let $\mathrm{R}_{1}(k,p,t)$ denote the lower bound from Corollary~\ref{cor-lower-Boolean-uniform}. Then
\[
\BRs_{k,t}(\mathcal B\mid B_p) > \mathrm{R}_{1}(k,p,t)=\min\{T_1(k),T_2\},
\]
where
\[
\begin{aligned}
T_1(k)&=p+\frac{h_p(t)+(h_p(t)-1)\log_2(k-1)-1}{4C_p},\\
T_2&=p+\frac{h_p(t)-1}{2C_p}.
\end{aligned}
\]
Let $\mathrm{R}_{2}(k,p,t)$ denote our lower bound from Corollary~\ref{cor-R-lower}, namely
\[
\mathrm{R}_{2}(k,p,t)
=
p+\frac{(h_p(t)-1)(\log_2 k-\log_2 c)}{C_p(1+o(1))}.
\]

\begin{lemma}\label{lem-DKB}
For every $k\ge3$,
\[
\min\{T_1(k),T_2\}=T_2.
\]
\end{lemma}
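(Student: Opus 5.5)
We need to show $T_2\le T_1(k)$ for every $k\ge3$. Subtracting $p$ and multiplying both sides by $4C_p>0$, the inequality $T_2\le T_1(k)$ is equivalent to
\[
2\bigl(h_p(t)-1\bigr)\le h_p(t)+\bigl(h_p(t)-1\bigr)\log_2(k-1)-1,
\]
that is,
\[
h_p(t)-1\le \bigl(h_p(t)-1\bigr)\log_2(k-1).
\]
So the whole proof reduces to a one-line comparison once we check the signs of the factors involved.

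The plan is therefore as follows. First, note that $h_p(t)\ge1$ always, so $h_p(t)-1\ge0$. In fact $B_p$ contains at least one $t$-chain for $t\le p+1$, and for $p\ge2$ it has many. Second, since $k\ge3$, we have $k-1\ge2$, hence $\log_2(k-1)\ge1$. Multiplying the nonnegative quantity $h_p(t)-1$ by a factor that is at least $1$ does not decrease it, which gives the displayed inequality. Reversing the algebraic steps, which only used subtraction of $p$ and multiplication by the positive constant $4C_p$, we get $T_2\le T_1(k)$, and hence $\min\{T_1(k),T_2\}=T_2$.

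There is no real obstacle here. The only point needing care is the sign of $h_p(t)-1$. If it were negative, the inequality would flip; it is not, since $h_p(t)$ counts a nonempty set of chains. The degenerate case $h_p(t)=1$ gives equality, which is harmless. Everything else is elementary rearrangement, and no earlier result is needed beyond the definition of $h_p(t)$ as a count of $t$-chains in Proposition~\ref{th-La2}.
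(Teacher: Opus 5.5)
Your proof is correct and is essentially the paper's argument. The paper solves $T_1(k)=T_2$ to get $k=3$ and then uses that $T_1$ is increasing in $k$, while you rearrange the inequality directly to $h_p(t)-1\le (h_p(t)-1)\log_2(k-1)$. Your version also makes explicit the sign condition $h_p(t)-1\ge 0$ that the paper's division and ``strictly increasing'' step tacitly assume, and so covers the degenerate case $h_p(t)=1$.
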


\begin{proof}
The equality $T_1(k)=T_2$ is equivalent to
\[
h_p(t)+(h_p(t)-1)\log_2(k-1)-1=2(h_p(t)-1),
\]
which reduces to $\log_2(k-1)=1$, that is, $k=3$. Since $T_1(k)$ is strictly increasing in $k$, it follows that $T_1(k)\ge T_2$ for every $k\ge3$.
\end{proof}

\begin{proposition}\label{pro-compa}
Our lower bounds improve upon Corollary~\ref{cor-lower-Boolean-uniform} in the following two senses:
\begin{enumerate}
    \item[(i)]
    \[
    \lim_{k\to\infty}\frac{\mathrm{R}_{2}(k,p,t)}{\mathrm{R}_{1}(k,p,t)}=+\infty,
    \]
    and hence there exists $k_0=k_0(p,t)$ such that $\mathrm{R}_{2}(k,p,t)>\mathrm{R}_{1}(k,p,t)$ for all $k>k_0$.
    \item[(ii)] Proposition~\ref{prop:general-log-lower} yields a logarithmic lower bound for every finite poset $Q$, whereas Corollary~\ref{cor-lower-Boolean-uniform} applies only to $Q=B_p$ and requires $t\ge2$.
\end{enumerate}
\end{proposition}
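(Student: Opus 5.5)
Both parts reduce to elementary comparisons once Lemma~\ref{lem-DKB} is in hand. That lemma gives, for every $k\ge3$, $\mathrm{R}_1(k,p,t)=\min\{T_1(k),T_2\}=T_2=p+\frac{h_p(t)-1}{2C_p}$. So $\mathrm{R}_1(k,p,t)$ is a positive constant independent of $k$, depending only on $p$ and $t$. Part (i) then amounts to showing that $\mathrm{R}_2(k,p,t)\to+\infty$ as $k\to\infty$.

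For (i), write
\[
\mathrm{R}_2(k,p,t)=p+\frac{(h_p(t)-1)(\log_2 k-\log_2 c)}{C_p(1+o(1))},
\]
where $c=c(p,2,t)$ is the fixed constant from Theorem~\ref{th-upper-Pr} and $o(1)$ is the quantity from Corollary~\ref{cor-R-lower}, which tends to $0$ as $k\to\infty$.
\begin{enumerate}
\item[(a)] Since $h_p(t)\ge2$ for $p\ge2$ (already $B_p$ contains at least two $t$-chains when $t\le p$), the factor $h_p(t)-1$ is positive.
\item[(b)] For $k$ large, $1+o(1)\le 2$, so the fraction is at least $\frac{(h_p(t)-1)(\log_2 k-\log_2 c)}{2C_p}$, which tends to $+\infty$.
\item[(c)] Dividing by the constant $T_2>0$ gives $\mathrm{R}_2/\mathrm{R}_1\to+\infty$.
\item[(d)] By the definition of the limit, there is $k_0$ with $\mathrm{R}_2/\mathrm{R}_1>1$, that is, $\mathrm{R}_2>\mathrm{R}_1$, for all $k>k_0$.
\end{enumerate}
Equivalently, we may pick $k_0$ explicitly so that $\log_2 k-\log_2 c> C_p\cdot 2\cdot\frac{1}{2C_p}=1$ and the $o(1)$ term is at most $1$. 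Then the fraction exceeds $\frac{h_p(t)-1}{2C_p}$, which gives $\mathrm{R}_2>T_2$ directly.

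Part (ii) is a comparison of hypotheses.
\begin{itemize}
\item Proposition~\ref{prop:general-log-lower} applies to any nonempty finite poset $Q$ with $h(Q,t)\ge2$ and any $t\ge1$. It gives $\BRs_{k,t}(\mathcal B\mid Q)\ge\frac{h(Q,t)-1}{m}(\log_2k-\log_2c_Q)$, which grows logarithmically in $k$.
\item Corollary~\ref{cor-lower-Boolean-uniform} is stated only for $Q=B_p$ with $t-1\ge1$. By Lemma~\ref{lem-DKB} it is in fact bounded in $k$.
\end{itemize}
So (ii) follows by juxtaposing the two statements.

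The main point needing care is the interpretation of the $o(1)$ in $\mathrm{R}_2$. It must be read as a quantity tending to $0$ as $k\to\infty$, via the $\varepsilon$-argument in the proof of Corollary~\ref{cor-R-lower}. With that reading, the bound $1+o(1)\le 2$ for large $k$ is legitimate. Everything else is routine.
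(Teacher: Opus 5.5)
Your proposal is correct and follows the paper's proof. Lemma~\ref{lem-DKB} makes $\mathrm{R}_1(k,p,t)=T_2$ a positive constant, while $\mathrm{R}_2(k,p,t)$ grows like $\log_2 k$, so the ratio diverges, and part (ii) is just a comparison of hypotheses. Your added care on two points is accurate and goes slightly beyond what the paper states: reading the $o(1)$ as $k\to\infty$, and noting that Proposition~\ref{prop:general-log-lower} actually requires $h(Q,t)\ge2$.
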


\begin{proof}
By Lemma~\ref{lem-DKB}, we have $\mathrm{R}_{1}(k,p,t)=T_2=O(1)$ as $k\to\infty$. On the other hand,
\[
\mathrm{R}_{2}(k,p,t)
=
\frac{h_p(t)-1}{C_p(1+o(1))}\log_2 k+O(1)
=
\Omega(\log k).
\]
Therefore
\[
\lim_{k\to\infty}\frac{\mathrm{R}_{2}(k,p,t)}{\mathrm{R}_{1}(k,p,t)}=+\infty,
\]
which proves part~(i). Part~(ii) follows directly from Proposition~\ref{prop:general-log-lower} and Corollary~\ref{cor-lower-Boolean-uniform}.
\end{proof}

\section{Future work}\label{sec7}

We conclude with several natural problems suggested by the present work.

\begin{problem}
Determine the exact value of $\LaT_t^\sharp(n,B_m)$ for large $n$, and develop analogous exact or asymptotic results for more general target posets.
\end{problem}

\begin{problem}
Clarify when the weak and strong Boolean lattice Erd\H{o}s-Gy\'{a}rf\'{a}s functions coincide, and construct explicit examples that exhibit a strict gap.
\end{problem}

\begin{problem}
Extend the present framework to other host poset families, such as grids, distributive lattices, or random posets.
\end{problem}

\section*{Acknowledgment}

The authors thank Kenta Ozeki for helpful comments and suggestions.

\end{document}